\newcommand{\R}{\mathbb R}
\newcommand{\F}{\mathbb F}
\newcommand{\Q}{\mathbb Q}
\newcommand{\Z}{\mathbb Z}
\newcommand{\fp}{\mathfrak p}
\newcommand{\fQ}{\mathfrak Q}
\newcommand{\co}{\mathcal{O}}
\newcommand{\fq}{\mathfrak q}
\newcommand{\Gal}{\mathrm{Gal}}
\newcommand{\ord}{\mathrm{ord}}
\newcommand{\N}{\mathrm{N}}
\newcommand{\Cl}{\mathrm{Cl}}
\newcommand{\cl}{\mathrm{cl}}
\newcommand{\leg}[2]{\left(\frac{#1}{#2}\right)}
\numberwithin{equation}{section}
\theoremstyle{plain}
\newtheorem{thm}{Theorem}[section]
\newtheorem{prop}[thm]{Proposition}
\newtheorem{cor}[thm]{Corollary}
\newtheorem{lem}[thm]{Lemma}
\theoremstyle{definition}
\newtheorem{rmk}[thm]{Remark}
\newtheorem*{Milovic's conjecture}{Milovic's conjecture}
\newtheorem*{Equivalent Form of the Conjecture}{Equivalent Form of the Conjecture}
\newtheorem*{conj*}{Conjecture}
\begin{document}
\title{On Class Numbers of Pure Quartic fields}

\author{Jianing Li}
\address{Wu Wen-Tsun Key Laboratory of Mathematics,  School of Mathematical Sciences, University of Science and Technology of China, Hefei, Anhui 230026, China}
\email{lijn@ustc.edu.cn}

\author{Yue Xu}
\address{Wu Wen-Tsun Key Laboratory of Mathematics,  School of Mathematical Sciences, University of Science and Technology of China, Hefei, Anhui 230026, China}
\email{wasx250@mail.ustc.edu.cn}

\subjclass[2010]{11R29, 11R16}

\keywords{class group, pure quartic field}

\maketitle

\begin{abstract}
Let $p$ be a prime. The  $2$-primary part of the class group of the pure quartic field $\mathbb{Q}(\sqrt[4]{p})$   has been determined  by Parry and Lemmermeyer when $p \not\equiv \pm 1\bmod 16$.  In this paper,  we improve the known results in  the case $p\equiv \pm 1\bmod 16$. In particular,   we determine all primes $p$ such that $4$ does not divide the class number of $\mathbb{Q}(\sqrt[4]{p})$. 
 We also  conjecture  a relation between the class numbers of  $\mathbb{Q}(\sqrt[4]{p})$ and $\mathbb{Q}(\sqrt{-2p})$.  We show that  this conjecture implies a  distribution  result of the $2$-class numbers of $\mathbb{Q}(\sqrt[4]{p})$.
\end{abstract}

\section{Introduction}
Let $p$ be a prime number.  Let $K$ be the pure quartic number field $\Q(\sqrt[4]{p})$.  The goal of this paper is to study the $2$-primary part of the class group $\Cl_K$ of $K$.   This question  has been studies by Parry  \cite{Parry} and Lemmermeyer \cite{Monsky}.      Parry showed that this group  is cyclic. So the question becomes to determine the exact divisibility of $2$-powers of the class number $h_K$ of $K$.    We list the known results.
\begin{enumerate}
	\item[(i)] If $p=2$ or $p\equiv \pm3  \bmod 8$,  then $2\nmid h_K$. 
	\item[(ii)]  If $p\equiv \pm 7\bmod 16$, then $2\parallel h_K$. The case $p\equiv -7\bmod 16$ is due to Lemmermeyer  \cite{Monsky}.
	\item [(iii)] If $p\equiv \pm 1\bmod 16$, then $2\mid h_K$.   In the case  $p\equiv 1\bmod 16$,   $2\parallel h_K$   if the quartic residue symbol $\leg{2}{p}_4$ equals to  $-1$.  
\end{enumerate}
We remark that in fact one  has $2\nmid h_{\Q(\sqrt[2^n]{p})}$   if $p\equiv \pm3 \bmod 8$ for any $n\geq 1$ and   $2\parallel h_{\Q(\sqrt[2^n]{p})}$ if $p\equiv 7\bmod 16$ for any $n\geq 2$, see \cite{ljn}.  Thus the remain problems are   to determine the $2$-divisibilities of $h_K$ when $p\equiv \pm 1\bmod 16$.   Our first result is the following.
\begin{thm}\label{thm: 1mod16}
	Let $p\equiv 1\bmod 16$ be a prime. Then  $2\parallel  h_K$   if and only if the quartic residue symbol $\leg{2}{p}_4=-1$. 
\end{thm}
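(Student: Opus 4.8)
The plan is to descend to the relative quadratic extension $K/k$ with $k=\Q(\sqrt p)$ and analyze the Sylow $2$-subgroup $A:=\Cl_K[2^\infty]$ as a module over $G=\Gal(K/k)=\langle\sigma\rangle$. Since $p\equiv 1\bmod 4$ is prime, genus theory gives that $h_k$ is odd, so the norm $\N_{K/k}$ kills $A$ (its image is a $2$-group inside the odd group $\Cl_k$); because the operator $1+\sigma$ acts on classes as $\N_{K/k}$ followed by extension of ideals, this forces $\sigma$ to act on $A$ by inversion. Combined with Parry's theorem that $A$ is cyclic, we conclude that $A$ is cyclic with $\sigma=-1$, so the ambiguous subgroup is $A^{G}=A[2]$, of order exactly $2$. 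Writing $\fc$ for its generator, the key reformulation is that (knowing already $2\mid h_K$ from case (iii)) one has $2\parallel h_K$, i.e. $4\nmid h_K$, if and only if the unique order-$2$ class satisfies $\fc\notin \Cl_K^2$; indeed for a cyclic $2$-group the element of order $2$ is a square precisely when the group has order $\ge 4$.

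Next I would pin down $\fc$ by a ramification analysis of $K/k=k(\sqrt{\sqrt p})$. The prime $p$ ramifies, but its prime $\fP_p=(\sqrt[4]p)$ is principal; exactly one infinite place of $k$ ramifies; and a $2$-adic computation, using that for $p\equiv 1\bmod 16$ the two square roots of $p$ in $\Z_2$ are $\equiv\pm1\bmod 8$, shows that of the two primes of $k$ above $2$ exactly one, say $\fp_2'$ with $\sqrt p\equiv -1\bmod 8$, ramifies in $K/k$ (giving a ramified quadratic local extension), the other splitting. Feeding the $t=3$ ramified places into Chevalley's formula $|\Cl_K^{G}|=h_k\,2^{t-1}/[E_k:E_k\cap \N_{K/k}K^{\ast}]$ reconfirms $|A^{G}|=2$ and forces the unit index to be $2$; since $[\fP_p]=1$, the ramified prime $\fP_2$ above $\fp_2'$ (with $\fP_2^{2}=\fp_2'\co_K$) must carry the nontrivial ambiguous class, so $\fc=[\fP_2]$.

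With this in hand the problem becomes a splitting question. As $2\mid h_K$, there is a unique unramified quadratic extension $H/K$, with $\Gal(H/K)\cong \Cl_K/\Cl_K^{2}$, and since $H/K$ is everywhere unramified, class field theory gives $\fc=[\fP_2]\in\Cl_K^{2}$ if and only if $\Frob_{\fP_2}$ is trivial, i.e. $\fP_2$ splits in $H$. I must therefore (i) produce $H$ explicitly as $K(\sqrt\delta)$ for a suitable $\delta\in k$ assembled from the fundamental unit $\varepsilon$ of $k$ and the ramified data, verifying by a local check at every prime (including the archimedean ones, where the sign of $\varepsilon$ must be controlled) that $K(\sqrt\delta)/K$ is unramified, and (ii) evaluate the splitting of $\fP_2$ in $K(\sqrt\delta)$ through the local symbol at the ramified $2$-adic prime, converting it by Hilbert reciprocity into symbols at $p$ and $\infty$. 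The expected outcome is that this symbol equals $\leg{2}{p}_4$, so that $\fP_2$ splits in $H$ exactly when $\leg{2}{p}_4=1$, i.e. $4\mid h_K\iff\leg{2}{p}_4=1$; negating yields $2\parallel h_K\iff\leg{2}{p}_4=-1$, with the implication in case (iii) recovered as the easy direction.

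I anticipate that step (ii)—extracting the quartic residue symbol from the $2$-adic computation—will be the main obstacle. The point is that $\leg{2}{p}_4$ is not a congruence condition on $p$ modulo any power of $2$, so it cannot be read off from the split/ramified dichotomy at $2$ alone; it must emerge from a genuine fourth-power (reciprocity) phenomenon. The delicate tasks are to choose $\delta$ so that $K(\sqrt\delta)/K$ is everywhere unramified, and to normalize the local symbol at the totally ramified prime $\fP_2$ (whose residue field is $\F_2$, so that the usual residue-field criterion degenerates) correctly, so that its value is unambiguously $\leg{2}{p}_4$ rather than merely the quadratic symbol $\leg{2}{p}$.
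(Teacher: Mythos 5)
Your framework (inversion action of $\sigma$ on the $2$-part, $A^G=A[2]$ of order $2$, and the equivalence $4\mid h_K\iff\fc\in\Cl_K^2\iff$ trivial Frobenius in the unramified quadratic extension $H/K$) is sound, but the proof has two genuine gaps. The first is your identification $\fc=[\fP_2]$. Chevalley's formula counts \emph{all} $\sigma$-invariant classes, whereas only the \emph{strongly} ambiguous ones (classes of $\sigma$-invariant ideals) are forced to be generated by the ramified primes $\fP_p,\fP_2$ and the image of $\Cl_k$; the discrepancy is measured by $(\co^\times_k\cap \N_{K/k}K^\times)/\N_{K/k}\co^\times_K$, which need not vanish. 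Concretely, the statement that the $2$-part of $[\fP_2]$ is nontrivial is \emph{equivalent} to the assertion that $\epsilon$ or $-\epsilon$ is the norm of a unit of $K$: if instead $\N_{K/k}\co^\times_K=\langle\epsilon^2\rangle$, then $|\mathrm{Am}_{st}|$ has trivial $2$-part, $[\fP_2]$ is trivial in $A$, and your criterion tests the wrong class. This norm-of-a-unit fact is true, e.g., for $p=17$, where $\N_{K/k}(\sqrt[4]{17}-2)=4-\sqrt{17}=-\epsilon^{-1}$, but you give no argument for it in general, and it is not a formality.

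The second gap is decisive: steps (i) and (ii) — producing $\delta$ with $H=K(\sqrt\delta)$ unramified, and proving that the splitting condition for $\fP_2$ equals $\leg{2}{p}_4$ — are where the entire content of the theorem lies, and you defer both (``the expected outcome is\dots''). Moreover, localizing at $2$ makes (ii) genuinely hard: with $H=K(\sqrt{\epsilon\sqrt p})$ (Lemmermeyer's field $F K$, where $F$ is the quartic subfield of $\Q(\zeta_p)$), the prime $\fP_2$ splits in $H$ iff $\epsilon\sqrt p$ is a square in $K_{\fP_2}\cong\Q_2(i)$, which unwinds to the condition that the image of $\epsilon$ under the ramified embedding $k\hookrightarrow\Q_2$ be $\equiv\pm1\bmod 8$; relating \emph{that} to $\leg{2}{p}_4$ is a quartic-reciprocity fact about the fundamental unit that is essentially as deep as the theorem itself. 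This is exactly the trap the paper avoids: it uses Monsky's equivalence $2\parallel h_K\iff 2\nmid h_L$ for $L=FK$, applies Chevalley's formula to $L/k(\sqrt\epsilon)$, and — via Hasse's norm theorem — only ever needs Hilbert symbols at the two primes of $k(\sqrt\epsilon)$ \emph{above $p$}, where the computation is immediate: for $\pi=m+n\sqrt p$ a generator of $\fq^{h_k}$ one has $m^2-pn^2=\pm2^{h_k}$, hence $(\pi,\sqrt p)_{\fp_i}=\leg{m}{p}=\leg{2}{p}_4$, with no $2$-adic information about $\epsilon$ required. Until you justify $\fc=[\fP_2]$ and actually carry out the symbol evaluation, what you have is a plausible program, not a proof.
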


 By Chebotarev's  density theorem,  the natural  density of the  set $\{p \text{ primes}:   p\equiv 1\bmod 16 \text{ and } 2\parallel h_K \}$ is $\frac{1}{16}$, see Remark~\ref{rmk: 1mod16}.

 We then turn to  study the case $p\equiv -1\bmod 16$.  We prove  the following improvement of Parry's and therefore we determine all primes $p$ such that $2\parallel h_K$.

\begin{thm}\label{thm: 4_divides pure}
Let $p\equiv -1 \bmod 16$ be a prime. Then  the $2$-primary part of $\Cl_K$ is cyclic and the class number $h_K$ is divisible by $4$.
\end{thm}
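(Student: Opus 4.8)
The plan is to work with the quadratic extension $K/F$, where $F=\Q(\sqrt p)$ is the real quadratic subfield, and to write $\sigma$ for the generator of $\Gal(K/F)$. Since $p\equiv -1\bmod 16$ forces $p\equiv 3\bmod 4$, the field $F$ has odd class number $h_F$, and by Parry's theorem the $2$-primary part of $\Cl_K$ is cyclic; combined with the already known divisibility $2\mid h_K$, the assertion $4\mid h_K$ is equivalent to saying that the $4$-rank of $\Cl_K$ is positive, i.e. that the unique class of order $2$ in $\Cl_K$ is a square. So the whole proof reduces to a $4$-rank (equivalently, a second-layer genus) computation, and the cyclicity claim is simply inherited from Parry.

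First I would pin down the ramification in $K/F$ and apply Chevalley's ambiguous class number formula
$$|\Cl_K^{\Gal(K/F)}|=\frac{h_F\cdot 2^{t-1}}{[\co_F^\times:\co_F^\times\cap \N_{K/F}K^\times]},$$
where $t$ is the number of places of $F$ ramifying in $K$. The prime $\fp=(\sqrt p)$ above $p$ is totally ramified, but the prime of $K$ above it is $(\sqrt[4]p)$, hence principal. For $p\equiv -1\bmod 16$ the prime $\fp_2$ above $2$ also ramifies: on completing at $\fp_2$ one has $F_{\fp_2}=\Q_2(\sqrt p)=\Q_2(i)$ (because $-p\equiv 1\bmod 8$ is a square in $\Q_2$), and adjoining $\sqrt[4]p$ produces $\Q_2(\zeta_8)$, a ramified quadratic extension of $\Q_2(i)$. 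Finally exactly one of the two real places of $F$ (the one sending $\sqrt p$ to a negative number) becomes complex and so ramifies, giving $t=3$. Feeding $h_F$ odd into the formula and computing the unit–norm index $[\co_F^\times:\co_F^\times\cap \N_{K/F}K^\times]$ via the Hasse norm theorem — that is, testing whether $-1$ and the fundamental unit $\varepsilon$ of $F$ are local norms at $\fp_2$ and at the ramified infinite place — determines $|\Cl_K^{\langle\sigma\rangle}|$ and exhibits an explicit generator $c_0$ of $\Cl_K[2]$, which I expect to be the class of the prime $\fP_2$ of $K$ above $2$ (the prime above $p$ being already principal).

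The crux, and the main obstacle, is to show that this order-$2$ class $c_0$ is a square in $\Cl_K$. I would argue this by a $2$-descent along $K/F$: the class $c_0=[\fP_2]$ lies in $\Cl_K^2$ precisely when $\fP_2$ is trivial in the quotient governing the second genus layer, a condition recorded by an Artin symbol in the genus field of $K$ (morally a power-residue symbol attached to $2$ and $p$). The decisive point is that the congruence $p\equiv -1\bmod 16$ makes this symbol trivial unconditionally, forcing $c_0\in\Cl_K^2$ and hence $4\mid h_K$; this contrasts with the case $p\equiv 1\bmod 16$, where the analogous symbol is exactly the quartic residue symbol $\leg{2}{p}_4$ governing Theorem~\ref{thm: 1mod16}. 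Equivalently one may seek to construct directly an unramified cyclic quartic extension of $K$ witnessing $c_0$ as a square. Either way, the heart of the argument is this reciprocity/norm computation showing that the relevant local obstruction vanishes for $p\equiv -1\bmod 16$, and I expect the bookkeeping of the unit–norm index and the local behaviour at $2$ to be the most delicate part.
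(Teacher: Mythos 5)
Your reduction is fine as far as it goes --- cyclicity quoted from Parry, the known divisibility $2\mid h_K$, the equivalence of $4\mid h_K$ with ``the unique order-$2$ class is a square,'' and the local analysis $K_{\fP_2}=\Q_2(\sqrt[4]{p})=\Q_2(\zeta_8)$ are all correct. But the proposal stops exactly where the theorem begins, and the gap is twofold. First, you never establish that the order-$2$ class is represented by an explicit ideal. Chevalley's formula gives the \emph{order} of the ambiguous class group; it does not tell you that $[\fP_2]$ is nontrivial. Indeed, since $h_{\Q(\sqrt p)}$ is odd and $(2)=\fp_2^2$ there, one has $\fp_2=(\pi)$ principal, hence $\fP_2^2=(\pi)$ in $K$ and $[\fP_2]$ has order $1$ or $2$; if $\fP_2$ happened to be principal, your candidate generator $c_0$ is trivial and any statement about its Artin symbol is vacuous. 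Ruling out principality of $\fP_2$ amounts to showing $u\pi\notin (K^\times)^2$ for every unit $u\in\co^\times_K$, which requires control of the rank-$2$ unit group of $K$ --- work you never do. Second, the ``decisive point'' --- that $p\equiv-1\bmod 16$ forces the relevant symbol to vanish --- is asserted, not proved: the symbol is never defined (``morally a power-residue symbol''), never computed (``I expect\dots''), and that assertion \emph{is} the content of the theorem, so the argument is circular at its core.

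For the record, your strategy is salvageable but the hard part is precisely what is missing: one can check easily that $\fP_2$ splits in $K(\sqrt2)/K$ (because $\sqrt2\in\Q_2(\zeta_8)=K_{\fP_2}$), so by the Artin map $[\fP_2]\in\Cl_K^2$, and then $4\mid h_K$ would follow from cyclicity \emph{provided} $[\fP_2]$ has order exactly $2$; the whole difficulty is the non-principality of $\fP_2$. The paper circumvents this unit problem in $K$ by a different route: it first shows $L=K(\sqrt2)$ is unramified everywhere over $K$, so Chevalley applied to $L/K$ gives $|\Cl_L^{\Gal(L/K)}|=|\Cl_K|/2$ and hence $4\mid h_K\iff 2\mid h_L$; it then proves $2\mid h_L$ by applying Chevalley to $L/F$ with $F=\Q(\sqrt2,\sqrt p)$, where the unit--norm index is computable because Kuroda's class number formula pins down $\co^\times_F=\langle\sqrt{\epsilon},\sqrt{\epsilon'},1+\sqrt2\rangle\times\{\pm1\}$, and explicit local computations (e.g.\ $2\pm\sqrt2\in(\Q_p^\times)^2$ since $\zeta_{16}+\zeta_{16}^{-1}\in\Q_p$ for $p\equiv-1\bmod16$) show the totally positive units of index $4$ are norms from $L$. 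That concrete norm/unit computation, in one form or another, cannot be skipped.
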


By class field theory, $K$ admits a unique unramified cyclic quartic extension  which we call it $4$-Hilbert class field of $K$. It is not hard to see that $K(\sqrt{2})$ is an unramified quadratic extension of $K$.
We construct the $4$-Hilbert class field in terms of the units of  the ring of integer in the quartic field $\Q(\sqrt{2\sqrt{p}})$. It can be shown  that there exists a totally positive unit $\xi$  such that the unit group $\co^\times_{\Q(\sqrt{2\sqrt{p}})}$ is generated by $\xi,\bar{\xi},-1$ and  the relative norm  $N_{\Q(\sqrt{2\sqrt{p}})/\Q(\sqrt{p})}(\xi)=\xi\bar{\xi}$ of $\xi$  is the fundamental unit of $\Q(\sqrt{p})$ (see Proposition~\ref{prop: units of K'}). 

\begin{thm}\label{thm: 4_hilbert class}
Let $p\equiv -1\bmod 16$ be a prime and $\xi$ as above.   Then the $4$-Hilbert class field of $K$ is $K(\sqrt{\xi})$.
\end{thm}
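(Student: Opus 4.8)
The plan is to prove that $K(\sqrt{\xi})$ is the $4$-Hilbert class field of $K$ by verifying three things: that $K(\sqrt{\xi})/K$ is a quadratic extension, that it is unramified at every prime (finite and infinite), and that it sits inside the known chain of field extensions so as to give a \emph{cyclic} unramified quartic extension of $K$. Since Theorem~\ref{thm: 4_divides pure} tells us that the $2$-primary part of $\Cl_K$ is cyclic with $4 \mid h_K$, class field theory guarantees a unique unramified cyclic quartic extension of $K$; so it suffices to exhibit one explicit such extension and show $K(\sqrt{\xi})$ is it. My strategy is to build this quartic extension as $K(\sqrt{2}, \sqrt{\xi})$ over $K$, using the already-noted fact that $K(\sqrt{2})/K$ is unramified quadratic.

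\textbf{Step 1: the bottom layer.} First I would recall from the discussion preceding the theorem that $K(\sqrt 2)$ is an unramified quadratic extension of $K$, which by uniqueness of the quadratic subextension must be the $2$-Hilbert class field of $K$. I would identify $K(\sqrt 2)$ with a field containing $\Q(\sqrt{2\sqrt p}) = \Q(\sqrt[4]{4p})$, whose unit group is described in Proposition~\ref{prop: units of K'} by the totally positive $\xi$ with $N_{\Q(\sqrt{2\sqrt p})/\Q(\sqrt p)}(\xi) = \xi\bar\xi$ equal to the fundamental unit $\varepsilon$ of $\Q(\sqrt p)$.

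\textbf{Step 2: ramification of $K(\sqrt\xi)$.} The heart of the argument is showing $K(\sqrt\xi)/K$ is unramified. Because $\xi$ is a unit, the extension can only ramify at primes above $2$ and at the infinite places. Since $\xi$ is totally positive, the infinite places split (or are already complex in $K$), so there is no ramification at infinity. For the dyadic primes, I would use a Kummer-theoretic/local criterion: a quadratic extension $F(\sqrt u)/F$ generated by a unit $u$ is unramified above $2$ precisely when $u$ is congruent to a square modulo the appropriate power of the dyadic prime (equivalently, $F(\sqrt u)/F$ is contained in the ray class field of conductor dividing $(4)$, and one checks $u \equiv \square \bmod 4\co$). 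The key input is the norm relation $\xi\bar\xi = \varepsilon$: this lets me control the $2$-adic behavior of $\xi$ by transferring the analysis to the well-understood fundamental unit of $\Q(\sqrt p)$ and the splitting of $2$ in the tower $\Q \subset \Q(\sqrt p) \subset \Q(\sqrt{2\sqrt p}) \subset K(\sqrt 2)$.

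\textbf{Step 3: cyclicity and conclusion.} Finally I would check that $K(\sqrt 2, \sqrt\xi)/K$ is cyclic of degree $4$ rather than biquadratic. The extension contains $K(\sqrt 2)$; cyclicity forces a specific Galois action in which the generator $\sigma$ sends $\sqrt 2 \mapsto -\sqrt 2$ and $\sqrt\xi \mapsto \pm\sqrt{\bar\xi}$, consistent with $\xi\bar\xi$ being a norm. Here the relation $\xi\bar\xi = \varepsilon$ being a \emph{square times a unit from the base} (or the precise structure given in Proposition~\ref{prop: units of K'}) is exactly what glues the two quadratic steps into a cyclic $C_4$ rather than $C_2 \times C_2$; I expect \textbf{this cyclicity/gluing check} to be the main obstacle, since it requires a careful Galois-action computation showing $\sqrt\xi$ is not fixed by the subgroup fixing $K(\sqrt 2)$ and that the quotient action has order $2$. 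Once unramifiedness and cyclicity are in hand, the uniqueness from Theorem~\ref{thm: 4_divides pure} identifies $K(\sqrt 2,\sqrt\xi) = K(\sqrt\xi)$ as the $4$-Hilbert class field, completing the proof.
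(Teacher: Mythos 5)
Your high-level frame (exhibit an explicit unramified cyclic quartic extension of $K$ and invoke uniqueness from Theorem~\ref{thm: 4_divides pure}) is legitimate, and your Step 3 is actually fine: writing $L=K(\sqrt 2)$ and $\tau$ for the nontrivial element of $\Gal(L/K)$, one has $\tau(\xi)=\bar\xi$ and $\tau(\sqrt{\epsilon})=-\sqrt{\epsilon}$ (since $\sqrt{\epsilon}=\pi/\sqrt 2$), so any lift $\tilde\tau$ satisfies $\tilde\tau^2(\sqrt{\xi})=-\sqrt{\xi}$ and the group is $C_4$, not $C_2\times C_2$. (A minor slip first: $K(\sqrt{\xi})/K$ is quartic, not quadratic, because $\xi\notin K$; adjoining $\sqrt{\xi}$ already forces $K'=\Q(\sqrt{2\sqrt p})\subset K(\sqrt{\xi})$, hence $K(\sqrt{\xi})\supset KK'=L$. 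You implicitly repair this later, but the opening claim is wrong.)

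The genuine gap is Step 2, which is where all the content lies, and the tool you propose there cannot close it. By Proposition~\ref{prop: units of K'}(1), the prime $\pi\co_k$ above $2$ splits in $K'/k$, say $\pi\co_{K'}=\fq_1\fq_2$, and the conjugation of $K'/k$ \emph{swaps} $\fq_1$ and $\fq_2$. Consequently the relation $\xi\bar\xi=\epsilon$ only constrains the \emph{product} of the local square classes of $\xi$ at $\fq_1$ and $\fq_2$ (the image of $\bar\xi$ at $\fq_1$ is the transported image of $\xi$ at $\fq_2$); it places no constraint whatsoever on the class of $\xi$ at either prime individually, so "transferring to the fundamental unit of $\Q(\sqrt p)$" cannot determine whether $\xi\equiv\square\bmod 4$ locally. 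The situation is genuinely delicate: since $h_{K'}$ is odd, $K'(\sqrt{\xi})/K'$ \emph{cannot} be unramified everywhere, so $\xi$ is definitely not a local square at the dyadic primes of $K'$; what is true is that its local class there is $2\times(\text{square})$, which becomes trivial only after passing to $L$ (where $\sqrt 2$ lives). Pinning down that specific class requires global input beyond the norm equation, and your proposal never supplies it. The paper avoids this computation entirely by arguing in the opposite direction: it starts from the $4$-Hilbert class field $M$, which exists abstractly by Theorem~\ref{thm: 4_divides pure}, shows $\Gal(M/k)\cong D_4$ (using that $\sigma$ inverts ideal classes because $h_k$ is odd), so that $M=L\cdot K'(\sqrt{\beta})$ for some $\beta\in K'$, and then uses oddness of $h_{K'}$ together with the ramification constraints on $M/K'$ (unramified outside $2$ and at infinity) to force $\beta$ to be, modulo squares, a totally positive unit other than $\epsilon$, i.e. $\xi$ or $\bar\xi$; the unramifiedness of $K(\sqrt{\xi})/K$ is thereby \emph{deduced}, never verified locally. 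To rescue your bottom-up route you would need an honest computation of the square class of $\xi$ in the completions at $\fq_1,\fq_2$, which is precisely the step the paper's dihedral argument is designed to circumvent.
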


We observe that there  are some relations between $h_K$ and $h(-2p)$, where $h(-2p)$ is   class number of the  imaginary quadratic field $\Q(\sqrt{-2p})$. 
Due to  Gauss, we know that   the $2$-primary part of $\Cl_{\Q(\sqrt{-2p})}$ is cyclic and nontrivial. We list the known results on  $2$-divisibility of $h(-2p)$.  For  $p\equiv \pm 1\bmod 8$,  write $p=u^2-2v^2$ with   $u,v\in \mathbb{N}$ and $u\equiv 1\bmod 4$.    The following results are due to R\'edei \cite{Redei},  Reichardt \cite{Reichardt}, Hasse \cite{Hasse} and Leonard-Williams \cite{LW82}. We refer the readers to  \cite{LW82}.

\begin{enumerate}
\item [(i)] $2 \parallel   h(-2p)$ if and only if $p\equiv \pm 3\bmod 8$. 
\item [(ii)]  Suppose  $p\equiv  1\bmod 8$.   We have that  $4 \parallel h(-2p)$ if and only if $\leg{2}{p}_4=-1$ and that $8\parallel h(-2p)$ if and only if $\leg{u}{p}_4=-1$.  
\item [(iii)] Suppose $p\equiv -1\bmod 8$. We have that  $4\parallel  h(-2p)$ if and only if $p\equiv 7\bmod 16$ and that  $8\parallel h(-2p)$ if and only if $p\equiv -1\bmod 16$ and  $(-1)^{\frac{p+1}{16}}\leg{2u}{v}=-1$. 
\end{enumerate}
The  residue  symbols $\leg{u}{p}_4$  and  $\leg{2u}{v}$ are independent of the choices of $u,v$  (see \cite{LW82} and  Lemma~\ref{(p) is well-defined}).     By comparing the results on $h_K$,   one finds that if $p\equiv \pm 3\bmod 8$ or  $p\equiv 7\bmod 16$,  then   $\ord_2(h(-2p))=\ord_2(h_K)+1$. 
 Based on  numerical evidence, we propose the following conjecture.

\begin{conj*}\label{conj: 1}
	$(1)$	If $p\equiv 15\bmod 32$ is a prime, then  $4\parallel h_K \Longleftrightarrow  16\mid h(-2p)$.
	
	$(2)$ If $p\equiv 31 \bmod 32$ is a prime, then $ 4\parallel h_K \Longleftrightarrow  8 \parallel h(-2p)$.
\end{conj*}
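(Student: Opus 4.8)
The plan is to prove the conjecture in an equivalent unified form obtained by a short elementary reduction. Since $p\equiv -1\bmod 16$ in both parts, the criteria (i)--(iii) for $h(-2p)$ give $8\mid h(-2p)$ automatically, with $8\parallel h(-2p)$ precisely when $(-1)^{(p+1)/16}\leg{2u}{v}=-1$. When $p\equiv 15\bmod 32$ the integer $(p+1)/16$ is odd, so $(-1)^{(p+1)/16}=-1$ and $16\mid h(-2p)\iff\leg{2u}{v}=-1$; when $p\equiv 31\bmod 32$ it is even, so $(-1)^{(p+1)/16}=1$ and $8\parallel h(-2p)\iff\leg{2u}{v}=-1$. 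Hence both $(1)$ and $(2)$ are equivalent to the single assertion
\[
4\parallel h_K \iff \leg{2u}{v}=-1,\qquad\text{equivalently}\qquad 8\mid h_K \iff \leg{2u}{v}=1,
\]
where $p=u^2-2v^2$ with $u\equiv 1\bmod 4$. By Theorem~\ref{thm: 4_divides pure} the $2$-primary part of $\Cl_K$ is cyclic and divisible by $4$, so the two alternatives are exhaustive and it suffices to produce a criterion for $8\mid h_K$ in terms of $\leg{2u}{v}$.

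Next I would translate $8\mid h_K$ into the existence of an unramified octic extension. Because $\Cl_K[2^\infty]$ is cyclic, $8\mid h_K$ holds if and only if the $4$-Hilbert class field $H_4$ admits an everywhere-unramified quadratic extension $H_8$ that is cyclic of degree $8$ over $K$. By Theorem~\ref{thm: 4_hilbert class} we know $H_4$ explicitly as $K(\sqrt{\xi})$, an extension containing $H_2=K(\sqrt 2)$, and by Proposition~\ref{prop: units of K'} the totally positive unit $\xi\in\co^\times_{\Q(\sqrt{2\sqrt p})}$ satisfies $N_{\Q(\sqrt{2\sqrt p})/\Q(\sqrt p)}(\xi)=\xi\bar\xi=\varepsilon$, the fundamental unit of $\Q(\sqrt p)$. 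The strategy is to attack the embedding problem $C_4\hookrightarrow C_8$ over $K$ by writing down a candidate generator $\eta\in H_4^\times$ (built from $\xi$, $\sqrt p$, and a generator of the prime of $H_4$ above $p$, exploiting the norm relation $\xi\bar\xi=\varepsilon$), and then reducing the requirement ``$H_4(\sqrt\eta)/K$ is unramified and cyclic of degree $8$'' to a finite list of local conditions, i.e.\ Hilbert symbols, at the primes of $H_4$ above $2$ and $p$.

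The final step is to evaluate these Hilbert symbols by reciprocity. Using the representation $p=u^2-2v^2$ to factor $p$ in $\Q(\sqrt 2)$ and to exhibit generators of the primes above $p$, the local conditions should collapse, after applications of quadratic and quartic reciprocity, to the single rational symbol $\leg{2u}{v}$ --- which is exactly the symbol that in (iii) separates $8\parallel h(-2p)$ from $16\mid h(-2p)$. This would close the loop and establish the unified assertion above. As an alternative (and a sanity check), one could instead seek an exact $2$-adic class-number relation in the dihedral tower refining the identity $\ord_2(h(-2p))=\ord_2(h_K)+1$ valid in the solved cases; the conjecture predicts that this identity acquires a correction governed by $\leg{2u}{v}$ once $p\equiv -1\bmod 16$.

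I expect the main obstacle to be precisely the local analysis at $2$ inside the dihedral field, which underlies both of the last two steps. The prime $2$ ramifies in $K$, and its behaviour in $H_2=K(\sqrt 2)$ and in $H_4$ depends delicately on $p\bmod 32$; this is the arithmetic source of the factor $(-1)^{(p+1)/16}$ and of the case split in the conjecture. Producing a generator $\eta$ with the correct behaviour at $2$, and proving that the resulting $2$-adic Hilbert symbol matches the rational symbol $\leg{2u}{v}$ coming from the R\'edei--Reichardt theory of $\Q(\sqrt{-2p})$ rather than some other quartic symbol, is the crux: it amounts to identifying the governing field of the condition $8\mid h_K$ with that of $16\mid h(-2p)$, and it is exactly this reciprocity identity that we have been unable to establish unconditionally, which is why the statement is recorded here as a conjecture.
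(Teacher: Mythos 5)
Your opening reduction is correct and is exactly the reduction the paper itself performs: since $(p+1)/16$ is odd when $p\equiv 15\bmod 32$ and even when $p\equiv 31\bmod 32$, the Leonard--Williams criterion collapses both parts to the single statement $4\parallel h_K \Longleftrightarrow \leg{2u}{v}=-1$, which is precisely what the paper records as the ``Equivalent Form of the Conjecture.'' But you must be aware of what the paper actually does with this statement: it does \emph{not} prove it. It is stated as a conjecture, supported by numerical evidence (Pari-gp computations for $p<10^6$), and the paper only proves the surrounding results --- the cyclicity and $4\mid h_K$ (Theorem~\ref{thm: 4_divides pure}), the explicit $4$-Hilbert class field $K(\sqrt{\xi})$ (Theorem~\ref{thm: 4_hilbert class}), and the equidistribution of the symbol $\leg{2u}{v}$ (Theorem~\ref{thm:density}), which together make the conjecture meaningful and give its conditional density consequence, but do not settle it.

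Measured against that, your proposal is not a proof either, and you say so yourself in the last paragraph. Everything after the reduction is a research plan: the candidate generator $\eta$ over $H_4=K(\sqrt{\xi})$ is never written down, the claim that ``$H_4(\sqrt{\eta})/K$ unramified cyclic of degree $8$'' reduces to finitely many Hilbert symbols is never established, and the asserted collapse of those symbols to the rational symbol $\leg{2u}{v}$ via reciprocity is precisely the identity you admit you cannot prove. That identity is not a routine verification that can be deferred; it is the entire content of the conjecture --- identifying the governing field of the condition $8\mid h_K$ with that of $16\mid h(-2p)$ --- and filling it would be new mathematics going beyond the paper. So the honest summary is: the portion you proved agrees with the paper's own (proved) reformulation, and the portion you left open is exactly the portion that is open.
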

	
By the above results on $h(-2p)$, this conjecture is  equivalent to the following. 
	
\begin{Equivalent Form of  the Conjecture}
 Let $p\equiv -1\bmod 16$ be a prime number.  Write $p=u^2-2v^2$ with $u,v\in \mathbb{N}$. Let $(p)$ denote the Jacobi symbol  $\leg{2u}{v}$.   Then
 \[4\parallel  h_K \Longleftrightarrow (p)=-1. \]
\end{Equivalent Form of the Conjecture}

Based on Cohen-Lenstra heuristic, Milovic \cite[Conjecture 1]{Milovic}   conjectures that for  each $k\geq 1$,  the natural density of  the set $\{p : p\equiv -1\bmod 4  \text{ and } 2^k\parallel h(-2p) \} $ is $\frac{1}{2^{k+1}}$. For $k=1, 2$, this  follows from the above results on $h(-2p)$ and the Dirichlet's density theorem  on arithmetic progressions. Milovic  \cite{Milovic} proves  the case $k=3$ by showing that  
\begin{equation}\tag{$\ast$}\label{eq: milovic}
\lim\limits_{X \rightarrow \infty} \frac{\#\{p \leq X: p \equiv -1 \bmod16 \text{ and } (-1)^{\frac{p+1}{16}}(p)=-1\}}{\#\{p \leq X: p \equiv -1 \bmod16 \}}=\frac{1}{2}.
\end{equation}
Use  his method, we  show the following.
\begin{thm}\label{thm:density} Let $(p)$ be as in the above conjecture. Then
\[\lim\limits_{X \rightarrow \infty} \frac{\#\{p \leq X, p \equiv -1 \bmod16: (p)=-1\}}{\#\{p \leq X, p \equiv -1 \bmod16 \}}=\frac{1}{2}.\]
\end{thm}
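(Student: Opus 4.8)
The plan is to reduce Theorem~\ref{thm:density} to a cancellation estimate of exactly the type treated by Milovic, and then to observe that his method applies with the fixed $2$-power twist removed. By the standard translation between the density of a $\{\pm1\}$-valued symbol and the cancellation in its summatory function, the theorem is equivalent to
\[\sum_{\substack{p \le X \\ p \equiv -1 \bmod 16}} (p) = o\bigl(N(X)\bigr), \qquad N(X) := \#\{p \le X : p \equiv -1 \bmod 16\},\]
where $(p)=\leg{2u}{v}$ is the symbol of the Equivalent Form of the Conjecture, well defined by Lemma~\ref{(p) is well-defined}. Comparing with \eqref{eq: milovic}, Milovic's result is equivalent to the same sum with each term weighted by $(-1)^{\frac{p+1}{16}}$; for $p\equiv -1\bmod 16$ this sign depends only on $p\bmod 32$, equalling $-1$ when $p\equiv 15\bmod 32$ and $+1$ when $p\equiv 31\bmod 32$. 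Thus the only difference between the two sums is a fixed sign governed by $p$ modulo a power of $2$.

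Next I would set up the spin sum underlying Milovic's argument. For $p\equiv -1\bmod 16$ one has $p\equiv -1\bmod 8$, so $p=u^2-2v^2=N_{\Q(\sqrt{2})/\Q}(\pi)$ splits in $\Z[\sqrt{2}]$ with $\pi=u+v\sqrt{2}$, and the symbol $(p)=\leg{2u}{v}$ is a residue (``spin'') symbol attached to the prime element $\pi$. The summatory function above is then rewritten as a sum of $\mathrm{spin}(\pi)$ over prime elements $\pi$ in the relevant region subject to the congruence $p\equiv -1\bmod 16$. Primality is detected by a combinatorial sieve identity (Vaughan/Heath-Brown), reducing the sum to \emph{type I} (linear) and \emph{type II} (bilinear) pieces, with the type II pieces controlled, after quadratic reciprocity, by cancellation in short sums of Jacobi symbols.

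I would then run Milovic's estimates essentially verbatim, the single modification being that the fixed weight $(-1)^{\frac{p+1}{16}}$ is dropped (equivalently, one does not impose, or one replaces, the finer condition modulo $32$). Since this weight depends only on $\pi$ modulo a fixed power of $2$, it can be Fourier-expanded into a bounded combination of congruence conditions on $u,v$ modulo $32$; such conditions are compatible with the bilinear decomposition and are immaterial to the type I and type II bounds, because the cancellation is driven entirely by the genuine oscillation of the spin symbol $\leg{2u}{v}$ as $u,v$ vary — which is precisely Milovic's key input. Consequently the same bounds give $o(N(X))$ for the untwisted sum, establishing the theorem.

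The main obstacle I anticipate is confirming that the cancellation truly survives removal of the twist: one must know that the spin symbol $\leg{2u}{v}$ does not itself collapse to a character of $2$-power conductor that could conspire with the absent factor $(-1)^{\frac{p+1}{16}}$. This non-degeneracy is exactly the genuine-oscillation phenomenon that powers the spin method, but verifying it requires careful $2$-adic bookkeeping, since all the arithmetic here is concentrated at the prime $2$: tracking $u$ and $v$ modulo powers of $2$ and keeping the spin representation consistent with the well-definedness of $(p)$ in Lemma~\ref{(p) is well-defined} is the delicate point. Once the twisted spin sum is set up correctly at the prime $2$, the analytic estimates transfer directly from Milovic's work.
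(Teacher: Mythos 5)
Your proposal is correct and takes essentially the same approach as the paper: reduce the statement to the cancellation estimate $\sum_{p\le X,\, p\equiv -1 \bmod 16}(p)=o\bigl(\#\{p\le X: p\equiv -1\bmod 16\}\bigr)$ via Dirichlet's theorem, then rerun Milovic's spin-symbol argument with the weight $(-1)^{\frac{p+1}{16}}$ compensated by a modification concentrated at the prime $2$. The paper implements your ``careful $2$-adic bookkeeping'' by defining the twisted spin symbol $[u+v\sqrt{2}]'=[u+v\sqrt{2}]\,\lambda(u+v\sqrt{2})$, where $\lambda$ is the sign determined by $u^2-2v^2 \bmod 32$, and checking the invariance $[(1+\sqrt{2})^8(u+v\sqrt{2})]'=[u+v\sqrt{2}]'$ (the analogue of Milovic's Proposition 2), after which his type I/type II estimates apply verbatim and even yield the power saving $X^{\frac{149}{150}+\epsilon}$ rather than mere $o$-cancellation.
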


Thus  we  obtain the following result.
\begin{cor}
	Assume the above conjecture  holds. Then
	\[ \lim_{X\rightarrow \infty}\frac{\# \{p\leq X: p\equiv -1\bmod  16 \text{ and } 4\parallel  h_K \}}{\# \{p\leq X: p\equiv -1\bmod 16 \}}=\frac{1}{2}.\]
\end{cor}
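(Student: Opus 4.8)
The plan is to derive the corollary by combining the assumed conjecture with Theorem~\ref{thm:density}; once those two inputs are in place the argument is essentially formal bookkeeping. First I would invoke the Equivalent Form of the Conjecture, which (granting the conjecture) asserts that for every prime $p\equiv -1\bmod 16$ one has $4\parallel h_K$ precisely when $(p)=-1$, where $(p)=\leg{2u}{v}$ for any representation $p=u^2-2v^2$ with $u,v\in\mathbb{N}$. Since Lemma~\ref{(p) is well-defined} guarantees that $(p)$ is independent of the chosen $u,v$, the condition $(p)=-1$ is a well-defined function of $p$ alone, so the characterization is unambiguous.

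With this equivalence in hand, the key step is to observe that the two counting sets coincide \emph{exactly}, not merely in density:
\[
\{p\leq X: p\equiv -1\bmod 16 \text{ and } 4\parallel h_K\}
=\{p\leq X: p\equiv -1\bmod 16 \text{ and } (p)=-1\}.
\]
Here I would recall that Theorem~\ref{thm: 4_divides pure} already forces $4\mid h_K$ for all $p\equiv -1\bmod 16$, so the genuinely discriminating condition is the exact divisibility $4\parallel h_K$, and it is exactly this condition that the conjectured biconditional controls. Dividing both cardinalities by $\#\{p\leq X: p\equiv -1\bmod 16\}$ then yields an identity of ratios valid for every $X$.

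Finally I would pass to the limit $X\to\infty$ and apply Theorem~\ref{thm:density}, which supplies the value $\tfrac12$ for the limit of the ratio attached to the condition $(p)=-1$; by the set identity above this is also the limit for the condition $4\parallel h_K$, completing the proof. I do not expect any genuine obstacle in this deduction: the two substantive inputs—the conjectural equivalence and the density computation of Theorem~\ref{thm:density}—are precisely where the work lies, and the corollary merely transports the density of the arithmetic condition $(p)=-1$ across the conjectured equivalence to the class-number condition. The one point deserving a word of care is confirming that the counted sets are \emph{literally} equal, which follows from the pointwise biconditional itself rather than from any averaging argument.
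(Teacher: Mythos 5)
Your proposal is correct and is exactly the argument the paper intends: the paper states this corollary as an immediate consequence of Theorem~\ref{thm:density} combined with the Equivalent Form of the Conjecture (whose equivalence with the original conjecture the paper has already established via Theorem~\ref{thm: 16-rank formula}). Your observation that the conjectured pointwise biconditional gives literal equality of the two counting sets, after which Theorem~\ref{thm:density} supplies the limit $\tfrac12$, is precisely the intended deduction.
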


There are $4927$ primes $p$ such that  $p<10^6$ and $p\equiv 15\bmod 32$. 
Pari-gp \cite{Pari} shows that   there are $2416$ primes $p$ such that $4\parallel h_{K}$ and there are $2511$  primes $p$ such that $8\parallel h_{K}$. Combine with \eqref{eq: milovic} and the above corollary,  one can   see  that  (under the above Conjecture)
\[\lim\limits_{X \rightarrow \infty} \frac{\#\{p \leq X: p \equiv 15 \bmod32 \text{ and }4\parallel h_K\}}{\#\{p \leq X: p \equiv 15 \bmod32 \}}=\lim\limits_{X \rightarrow \infty} \frac{\#\{p \leq X:  p \equiv 31 \bmod32 \text{ and } 4\parallel h_K\}}{\#\{p \leq X: p \equiv 31 \bmod32 \}}=\frac{1}{2}.\]
This partially explains the above data.   

The rest of this paper is organized as follows. In \S 2, we give some preliminaries and prove the result for $p\equiv 1\bmod 16$. In section 3, we prove Theorem~\ref{thm: 4_divides pure} and \ref{thm: 4_hilbert class}. In section 4, we discuss the density results on class numbers.

\subsection*{Acknowledgement}
Research is partially supported by Anhui Initiative in Quantum Information Technologies (Grant No. AHY-10200) and the Fundamental Research Funds for the Central Universities(no. WK0010000058). The authors thank  Franz Lemmermeyer for  helpful
 email exchanges.

\section{Preliminary and the case $p\equiv 1\bmod 16$}
In this section, we state Chevalley's ambiguous class number formula which is the main tool we used. Then we give the proof of the  cyclicity of the $2$-primary part of $\Cl_K$ for any $p$.   After that  we  prove that if $p\equiv 1\bmod 16$, then  $\leg{2}{p}_4=-1$  if and only if $2\parallel h_K$.

    Let $M/L$ be a cyclic  extension of number fields with Galois group $G$.   Then the ambiguous class number formula states as

\begin{equation}\label{eq: chevalley}
|\Cl^G_M|=|\Cl_L| \frac{\prod_{v}e_v}{[M:L]} \frac{1}{[\co^\times_L:\co^\times_L\cap N(M^\times)]}.
\end{equation}
Here $e_v$ is the ramification index of $v$ and the products run over all places of $L$ (including the infinite places). The norm $N$ is from $M$ to $L$.  See \cite{Lemmermeyer2} for a proof of this result.  If $G$ is a cyclic $\ell$-group where $\ell$ is a prime, then  $\ell\nmid |\Cl^G_M|$ implies  that  $\ell \nmid |\Cl_M|$. Because for $a\not\in \Cl^G_M$, the cardinality of the orbit of $a$ is divisible by $\ell$. Hence $|\Cl_M|\equiv |\Cl^G_M|\bmod \ell$.

\begin{prop}[Parry]\label{prop: cyclic}
Let $p$ be a prime. Then the $2$-primary part  of the class group of $K$ is cyclic.
\end{prop}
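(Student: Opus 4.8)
The plan is to bound the $2$-rank $r_2 := \dim_{\F_2}(\Cl_K/\Cl_K^2)$ and show $r_2 \le 1$, which is exactly the assertion that the $2$-primary part is cyclic. I would work with the quadratic subextension $F := \Q(\sqrt{p}) \subset K$, set $G = \Gal(K/F) = \langle \sigma \rangle$ with $\sigma(\sqrt[4]{p}) = -\sqrt[4]{p}$, and exploit the fact that the $2$-primary part of $\Cl_F$ is trivial. Indeed, $h_F$ is odd for every prime $p$ by classical genus theory for the real quadratic field $\Q(\sqrt{p})$: only $p$ (and possibly $2$) ramify in $F/\Q$, so the narrow genus group has $2$-rank at most $1$, and one checks the wide class number is odd.

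First I would show that $\sigma$ acts on the $2$-primary part $A_2$ of $\Cl_K$ as inversion. The norm $N := N_{K/F}\colon \Cl_K \to \Cl_F$ carries the $2$-group $A_2$ into the $2$-primary part of $\Cl_F$, which vanishes since $h_F$ is odd; hence $N(A_2) = 1$. Composing with the natural map $j\colon \Cl_F \to \Cl_K$ and using the standard identity $j \circ N = 1 + \sigma$ on $\Cl_K$, we obtain $c^{1+\sigma} = 1$, i.e. $\sigma(c) = c^{-1}$, for all $c \in A_2$. Consequently the $\sigma$-fixed classes in $A_2$ are exactly the $2$-torsion $A_2[2]$, so the $2$-primary part of the ambiguous class group $\Cl_K^G$ is precisely $A_2[2]$, an elementary abelian group of rank $r_2$. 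Therefore $r_2 = \ord_2\!\left(|\Cl_K^G|\right)$, and it suffices to prove that the ambiguous class number is divisible by $2$ at most once.

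To evaluate $|\Cl_K^G|$ I would apply Chevalley's formula \eqref{eq: chevalley} to $K/F$. Since $h_F$ is odd, the $2$-adic valuation reads $\ord_2\!\left(|\Cl_K^G|\right) = (t-1) - \ord_2(Q)$, where $t$ is the number of places of $F$ ramifying in $K/F$ and $Q = [\co_F^\times : \co_F^\times \cap N(K^\times)]$ is a $2$-power of order at most $4$ (because $\co_F^\times/(\co_F^\times)^2$, with $\co_F^\times = \langle -1 \rangle \times \langle \varepsilon \rangle$, has order $4$). Two ramified places are immediate: the prime $\mathfrak{p}$ above $p$, since $K = F(\sqrt{\sqrt{p}})$ and $\sqrt{p}$ is a uniformizer at $\mathfrak{p}$, and the real place of $F$ at which $\sqrt{p} < 0$, which becomes complex in $K$. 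Writing $s$ for the number of primes of $F$ above $2$ that ramify in $K/F$, we have $t = 2 + s$, so the desired bound $r_2 \le 1$ reduces to the inequality $s \le \ord_2(Q)$.

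The main obstacle is exactly this local analysis at the prime $2$. One must determine, according to the splitting of $2$ in $F$ (governed by $p \bmod 8$) together with the $2$-adic squareness of $\sqrt{p}$, which primes above $2$ ramify in $K/F$, and simultaneously compute the unit norm index $Q$ by deciding which of $-1$, $\varepsilon$, $-\varepsilon$ are everywhere local norms from $K$. The expectation, reflecting the usual genus-theoretic balancing, is that each ramified prime above $2$ imposes an additional local non-norm condition on the units, so that $\ord_2(Q)$ grows at least as fast as $s$; confirming $s \le \ord_2(Q)$ across the residue classes of $p$ modulo $16$ then yields $r_2 \le 1$ and hence the cyclicity of the $2$-primary part of $\Cl_K$.
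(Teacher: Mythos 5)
Your framework is exactly the paper's: you pass to $k=\Q(\sqrt p)$, use oddness of $h_k$ to get that $\sigma$ acts by inversion on the $2$-part, conclude $A^G=A[2]$, and then feed Chevalley's formula for $K/k$ with the ramified place above $p$, the ramified real place, and the unit index $Q$. The reduction $r_2 = 1 + s - \ord_2(Q)$, hence ``cyclicity $\Leftrightarrow s \le \ord_2(Q)$,'' is correct. But at that point you stop: the inequality is asserted only as an ``expectation'' from genus-theoretic balancing, and this is precisely the substance of the proof, not a formality. The paper spends most of its argument establishing the two inputs concretely: (i) $\ord_2(Q)\ge 1$ because $-1$ fails to be a norm at the ramified \emph{real} place (not because of any condition at $2$); and (ii) $s=1$ for every odd $p$, proved by factoring $x^4-p$ over $\Q_2$ in four cases --- $p\equiv 3\bmod 4$ (Eisenstein, totally ramified), $p\equiv 5\bmod 8$ ($2$ inert in $k$, then Eisenstein over $\Q_2(\sqrt p)$), $p\equiv 1\bmod 16$ ($x^4-p=(x-\sqrt[4]p)(x+\sqrt[4]p)(x^2+\sqrt p)$ with exactly the last factor ramified), and $p\equiv 9\bmod 16$ ($\sqrt p\equiv\pm3\bmod 8$, so exactly one of $\Q_2(\sqrt{\pm\sqrt p})$ is ramified).

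Note also that your heuristic points in the wrong direction: it is not the case that each ramified prime above $2$ forces an extra unit non-norm condition; rather, the unit bound $\ord_2(Q)\ge 1$ already comes for free from infinity, and the whole point is to rule out $s=2$ (e.g.\ when $2$ splits in $k$ for $p\equiv 1\bmod 8$, one must check that exactly one of the two primes ramifies in $K$, which is what the $2$-adic factorization shows). Without that case-by-case local computation, the claim $s\le\ord_2(Q)$ is unproven, and your argument is incomplete exactly at the step you yourself flag as ``the main obstacle.''
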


\begin{proof}
Let $A$  be the $2$-primary part of $\Cl_K$. Put $k=\Q(\sqrt{p})$ and   $G=\Gal(K/k)=\{1,\sigma\}$. It is well-known that the class number $h_k$  of $k$ is odd.  Thus we have $a^\sigma a=1$ for $a\in A$. This implies $A^G=A[2]:=\{a\in A|a^2=1\}$.   Applying Chevalley's formula on the quadratic extension $K/k$ gives
\[|A[2]|=|A^G|=\frac{\prod_{v} e_v}{2} \frac{1}{[\co^\times_{k}:\co^\times_{k}\cap N(K^\times)]}. \]
We have $-1\notin N(K^\times)$ since one of the  infinite places of $k$  is ramified. This implies $[\co^\times_{k}:\co^\times_{k}\cap N(K^\times)]=2 \text{ or } 4$.  We will show  that  for any odd prime $p$ one has $\prod_{v} e_v=8$  where $v$ runs over all places of $k$. It follows that  $|A[2]|=1 \text{ or } 2$. Hence  $A$ is trivial or cyclic.

 Obviously, $K/k$ is unramified outside places above $2,p$ and $\infty$. Here $\infty$ denotes the  real place  $\infty$  such that $\infty(\sqrt{p})<0$.   Also note that $(\sqrt{p})\co_k$ is ramified.  Thus $\prod_{v\nmid 2}e_v=4$.   We compute the ramification index at $2$ as follows.

If $p\equiv 3\bmod 4$, then $(x+1)^4-p$ is an Eisenstein polynomial in $\Q_2[x]$. Thus $2$ is totally ramified in $K/\Q$. This implies that $\prod_{v}e_v=8$ where $v$ runs over all places of $k$.

If $p\equiv 5 \bmod 8$, then $2$ is inert in $k$. Because $(x+1)^2-\sqrt{p}$ is an Eisenstein polynomial in $\Q_2(\sqrt{p})$, we have $2\co_{k}$ is ramified in $K$. Hence $\prod_{v}e_v=8$.

If $p\equiv 1 \bmod 16$, $x^4-p$ has solutions $\pm \sqrt[4]{p}$ in $\Q_2$.  Then $\sqrt{p}=(\sqrt[4]{p})^2\equiv 1 \bmod 8$.  Thus in $\Q_2[x]$,  we have  $x^4-p=(x-\sqrt[4]{p})(x+\sqrt[4]{p})(x^2+\sqrt{p})$. Note that  $(x+1)^2+\sqrt{p}$ is an Eisenstein polynomial. Therefore, there are three primes  in $K$ above $2$ and exactly one of them ramified.   This implies that $\prod_v{e_v}=8$.

If $p\equiv 9 \bmod 16$, then $x^4-p=(x^2-\sqrt{p})(x^2+\sqrt{p})$, and $\sqrt{p}\equiv \pm 3\bmod 8$. We have  $x^2\pm \sqrt{p}$ are irreducible in $\Q_2[x]$.  Because   $\Q_2(\sqrt{3})/\Q_2$ is ramified and $\Q_2(\sqrt{-3})/\Q_2$ is unramified.    Thus $\prod_v{e_v}=8$.  

 The proof of the proposition is complete.\end{proof}

Now we give the proof of Theorem~\ref{thm: 1mod16}. Much of the proof are as same as Lemmermeyer's proof of that  if  $p\equiv 9\bmod 16$ implies $2\parallel h_K$. The  only difference is that  in the case $p\equiv 1\bmod 16$, we need to investigate all the units of a quartic field. 

\begin{proof}[Proof of the Theorem~\ref{thm: 1mod16}]
 For  $p\equiv 1\bmod 8$,  the following are proved in \cite{Monsky}.     Let $F$ be the unique quartic subfield of the $p$-th cyclotomic field $\Q(\zeta_p)$.  Then $L:=FK$ is a quadratic unramified extension of $K$ and $F$ is the field $\Q(\sqrt{\epsilon \sqrt{p}})$ where $\epsilon$ is the fundamental unit of $k:=\Q(\sqrt{p})$.  One has 
\[2\parallel h_K \quad \text{ if and only if } \quad  2\nmid h(L).\] To prove $2\nmid h(L)$, Lemmermeyer uses Chevalley's formula on $L/k(\sqrt{\epsilon})$.  The class number of $k(\sqrt{\epsilon})$ is odd. This can be proved by the fact that $2\mid h_k$ and $\epsilon$ has norm $-1$.  The  primes of $k(\sqrt{\epsilon})$ ramified in $L$ are the two prime ideals above $p$.   In the case  $p\equiv 9\bmod 16$,   $\sqrt{\epsilon}$ is not a norm  of $L^\times$.  (In the case  $p\equiv 1\bmod 16$, $-1$ and $\sqrt{\epsilon}$  are  norms of $L^\times$.)  In particular the unit index 
\[[\co^\times_{k(\sqrt{\epsilon})}:\co^\times_{k(\sqrt{\epsilon})}\cap N(L^\times)]\geq 2.  \]
So one obtains $2\nmid h(L)$, hence $2\parallel h_K$ when $p\equiv 9\bmod 16$.

In order to prove our result, we need to show that for  $p\equiv 1\bmod 16$,  
\begin{equation}\label{eq: 1mod16}
[\co^\times_{k(\sqrt{\epsilon})}:\co^\times_{k(\sqrt{\epsilon})}\cap N(L^\times)]=2  \quad \text{ if and only if } \quad  \leg{2}{p}_4=-1. 
\end{equation}
 
 Since $p\equiv 1\bmod 16$, we write  $(2){\co_k}=\fq \bar{\fq}$.  Then we may assume $\fq$ is ramified in $K$ and $\bar{\fq}$ splits in $K$. Since $\fq,\bar{\fq}$ is unramified in $F$. We have $\fq$ is ramified in $k(\sqrt{\epsilon})$ and $\bar{\fq}$ is unramified in $k(\sqrt{\epsilon})$.    Then $\fq\co_{k(\sqrt{\epsilon})}=\fQ^2$. Let $\pi \in \co_k$ be the generator of the principal ideal $\fq^{h_k}$.  Since the class number of $k(\sqrt{\epsilon})$ is odd,  we have $\fQ^{h_k}=\alpha\co_{k(\sqrt{\epsilon})}$. Then 
 we produce a unit $\frac{\alpha^2}{\pi}$ of  $\co_{k(\sqrt{\epsilon})}$.   The unit group  $\co^\times_{k(\sqrt{\epsilon})}$ is isomorphic to $\Z^2\times \Z/{2\Z}$ by Dirichlet's unit theorem.
We claim that the following $3$-dimensional $\F_2$-vector space  \[\co^\times_{k(\sqrt{\epsilon})}/({\co^\times_{k(\sqrt{\epsilon})}})^2   \text{ is generated by }   -1, \sqrt{\epsilon}, \frac{\alpha^2}{\pi}. \]
Firstly we show that  $\frac{\alpha^2}{\pi}$ and $-1$ are linearly independent in this vector space. Suppose that $\frac{\alpha^2}{\pi}=\pm a^2$, then  $\sqrt{\pm \pi}\in k(\sqrt{\epsilon})$ and $k(\sqrt{\pm \pi})=k(\sqrt{\epsilon})$. It follows that $\pm \pi=\epsilon a^2$ with $a\in k$. This contradicts to that $\pi\co_k=\fq^{h_k}$, because $h_k$ is odd. 
It is easy to see that   $-1$  and $\sqrt{\epsilon}$ are linearly independent.  Suppose that these three elements are not linearly independent. Then  $\frac{\alpha^2}{\pi}=\pm \sqrt{\epsilon} b^2$ for some $b\in k(\sqrt{\epsilon})$. By taking norm from $k(\sqrt{\epsilon})$ to $\Q$, we obtain $-1=c^2$ for some $c\in \Q$. Here we use the fact $\epsilon$ has norm $-1$.    This is  a contradiction.  This   proves the claim.

Let $\fp_1,\fp_2$ be the prime ideals above $p$ in $k(\sqrt{\epsilon})$. They are  exactly the  places of $k(\sqrt{\epsilon})$ ramified in $L$. Note that $L=k(\sqrt{\epsilon})(\sqrt{\sqrt{p}})$. Then by Hasse's norm theorem, $\frac{\alpha^2}{\pi}\not\in N(L^\times) $ if and only if the quadratic Hilbert symbol  $(\frac{\alpha^2}{\pi},\sqrt{p})_{\fp_i}=-1$ for $i=1,2$.  Write $\pi=m+n\sqrt{p}$, then  $\pi \equiv m\bmod \sqrt{p}$ and $m^2-n^2p=2$.  Then
\[(\frac{\alpha^2}{\pi},\sqrt{p})_{\fp_i}=(\pi,\sqrt{p})_{\fp_i}=(m,\sqrt{p})_{\fp_i}=(m,p)_p=\leg{m}{p}=\leg{2}{p}_4.\]
 Hence \eqref{eq: 1mod16} is proved.  This finishes the proof of the theorem.\end{proof}

\begin{rmk}\label{rmk: 1mod16}
$(1)$ We remark that for $p\equiv 1\bmod 16$, then  $\leg{2}{p}_4=-1 \Longleftrightarrow 4\parallel h(-2p) \Longleftrightarrow 4\parallel h(-p)$, where $h(-p)$ is the class number of $\Q(\sqrt{-p})$.  See \cite{LW82}.

$(2)$ In order to compute the natural density of the set $\{p: p\equiv 1\bmod 16 \text{ and } \leg{2}{p}_4=-1 \}$, we consider the Galois extension $\Q(\zeta_{16},\sqrt[4]{2})/\Q$. Note that the degree of this extension is $16$. Let $\tau$ be the generator of $H:=\Gal(\Q(\zeta_{16},\sqrt[4]{2})/\Q(\zeta_{16}))$. Then the order of the conjugate class of $\tau$ is $1$ since $H$ is a  normal subgroup. 
A prime  $p\equiv 1\bmod 16$ and $\leg{2}{p}_4=-1$ if and only if the Frobenius of $p$ is $\tau$. By  Chebotarev's density theorem, the natural density of $\{p: p\equiv 1\bmod 16 \text{ and } \leg{2}{p}_4=-1 \}$ is $\frac{1}{16}$.

\end{rmk}

\section{Proof of Theorem~\ref{thm: 4_divides pure} and Theorem~\ref{thm: 4_hilbert class}.}
Let $p\equiv -1 \bmod 16$ be a prime in this section.
Put $L=K(\sqrt{2})$ and $F=\Q(\sqrt{p},\sqrt{2})$.  Let  $k=\Q(\sqrt{p}),k'=\Q(\sqrt{2p})$ and $k_0=\Q(\sqrt{2})$ be the  quadratic subfields of $F$.  In order to prove  Theorem \ref{thm: 4_divides pure}, we consider  the extensions $ F/k$, $L /F $ and $L/K$.
\begin{lem}\label{prop: equivalent of thm1 and thm2}
We have $4 \mid h_K$   if and only if $2\mid h_L$.
\end{lem}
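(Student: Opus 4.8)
The plan is to apply Chevalley's ambiguous class number formula \eqref{eq: chevalley} to the quadratic extension $L/K$ and then translate the resulting count of ambiguous classes into a statement about the parity of $h_L$. Write $G=\Gal(L/K)=\{1,\sigma\}$. The crucial input is that $L=K(\sqrt{2})$ is unramified over $K$ at \emph{every} place: it is unramified at all finite primes (this is the observation recorded before Theorem~\ref{thm: 4_hilbert class}), and it is unramified at infinity as well, since $2>0$ forces each of the two real places of $K$ to split in $L$ and the single complex place of $K$ to split too. Consequently $\prod_v e_v=1$ in \eqref{eq: chevalley}, and the formula collapses to
\[|\Cl_L^G|=\frac{h_K}{2\,[\co^\times_K:\co^\times_K\cap N(L^\times)]}.\]

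Next I would show that the unit index in the denominator is trivial, i.e. $\co^\times_K\subseteq N(L^\times)$. Since $L/K$ is cyclic, Hasse's norm theorem reduces being a global norm to being a local norm at every place. For a unit $u\in\co^\times_K$ this is automatic: at each finite place the local extension is unramified or split, so $u$ is a local norm there, and at each archimedean place the extension splits, so $u$ is trivially a local norm. Hence the unit index equals $1$ and we obtain the clean identity $|\Cl_L^G|=h_K/2$. In particular $2\mid h_K$, consistent with the value already recorded in the introduction for $p\equiv -1\bmod16$.

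Finally I would convert this identity into the desired equivalence by a parity argument. From $|\Cl_L^G|=h_K/2$ it follows that $4\mid h_K$ if and only if $|\Cl_L^G|$ is even, so it suffices to check that $|\Cl_L^G|$ is even if and only if $h_L$ is even. One direction is immediate because $\Cl_L^G\subseteq\Cl_L$. For the converse, suppose $2\mid h_L$ and let $B$ be the $2$-Sylow subgroup of $\Cl_L$, which is nontrivial and $G$-stable. Counting $G$-orbits on $B$ (each of size $1$ or $2$) gives $|B^G|\equiv|B|\equiv 0\bmod 2$, and since $B^G\subseteq\Cl_L^G$ this forces $|\Cl_L^G|$ to be even. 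Combining the two directions yields $4\mid h_K\iff 2\mid h_L$, as claimed. (Note that cyclicity of the $2$-primary part from Proposition~\ref{prop: cyclic} is not actually needed for this lemma; the ambiguous class number formula alone suffices.)

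The step I expect to require the most care is the vanishing of the unit index: one must genuinely verify that $L/K$ is unramified at all places, including the archimedean ones, so that Hasse's theorem leaves no unit obstruction. This is precisely the point where the signature of $K$—two real places and one complex place, all of which split in $L$—is decisive, in contrast with the ramified infinite place that produces the nontrivial index $2$ or $4$ in the proof of Proposition~\ref{prop: cyclic}.
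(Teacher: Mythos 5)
Your argument is correct and is essentially the paper's own proof: apply Chevalley's formula \eqref{eq: chevalley} to the everywhere-unramified quadratic extension $L/K$, use Hasse's norm theorem plus local class field theory to get $[\co^\times_K:\co^\times_K\cap N(L^\times)]=1$, deduce $|\Cl_L^{\Gal(L/K)}|=h_K/2$, and finish with the orbit-counting parity argument (the paper invokes its general remark $|\Cl_M|\equiv|\Cl_M^G|\bmod \ell$ following \eqref{eq: chevalley}; your $2$-Sylow version is the same computation). The one soft spot is that you outsource unramifiedness of $L/K$ at the finite primes to the introduction's remark, which is stated there without proof --- its justification in the paper is given inside this very lemma, so the citation is formally circular, and this step is in fact the only place where the hypothesis $p\equiv -1\bmod 16$ is used. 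The missing verification is short: outside $2$ there is nothing to check since $\Q(\sqrt{2})/\Q$ is unramified outside $2$; at $2$, since $-p\equiv 1\bmod 16$ is a fourth power in $\Q_2^\times$, one gets $\Q_2(\sqrt[4]{p})=\Q_2(\zeta_8)\supset\Q_2(\sqrt{2})$, so every prime of $K$ above $2$ splits (in particular is unramified) in $L$. With that computation inserted, your proof coincides with the paper's.
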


\begin{proof}
Firstly note that $L/K$ is unramified outside $2$ because $\Q(\sqrt{2})/\Q$ is unramified outside $2$.  Since $p\equiv -1 \bmod 16$, $\Q_2(\sqrt[4]{p})=\Q_2(\sqrt[4]{-1})=\Q_2(\zeta_8)\supset \Q_2(\sqrt{2})$.  It follows that $L/K$ is unramified at every prime ideal above $2$, hence everywhere unramified.  By Hasse's norm theorem and local class field theory, we have  $[\co^\times_K:\co^\times_K\cap N(L^\times)]=1$.
Applying Chevalley's formula \eqref{eq: chevalley} to $L/K$ gives

\[|\Cl^{\Gal(L/K)}_L|=\frac{|\Cl_K|}{2}.\]
Therefore $4 $ divides  $|\Cl_K|$  $\Longleftrightarrow$   $ 2$ divides $|\Cl^{\Gal(L/K)}_L|$  $\Longleftrightarrow$ $ 2 $ divides $ |\Cl_L|$. 
\end{proof}

\begin{prop}\label{prop: units of base}
$(1)$ The class numbers  $h_{k},h_{k'},h_{k_0}$ and $h_F$ are all odd.

$(2)$ Let $\epsilon,\epsilon', 1+\sqrt{2}$ be the  fundamental units of $k,k',k_0$ respectively. Then $\co^\times_F=\langle \sqrt{\epsilon}, \sqrt{\epsilon'}, 1+\sqrt{2}\rangle \times \{\pm 1\}$.
\end{prop}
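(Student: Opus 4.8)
The plan is to prove the two parts in an interlocking order: first the oddness of $h_k,h_{k'},h_{k_0}$ (cheap, via the tool already introduced), then the unit computation of part~(2) (the heart of the matter), and finally the oddness of $h_F$, which drops out of Kuroda's class number formula once the unit index is known.

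For the three quadratic fields I would apply Chevalley's formula \eqref{eq: chevalley} to each of $k/\Q$, $k'/\Q$, $k_0/\Q$. Since $p\equiv 3\bmod 4$, the Pell equations $x^2-py^2=-1$ and $x^2-2py^2=-1$ are already insoluble modulo $p$, so $N(\epsilon)=N(\epsilon')=+1$ and $-1\notin N(k^\times),\,N(k'^\times)$, whereas $N(1+\sqrt2)=-1$. In each case $\Cl_\Q$ is trivial, the infinite place is unramified, and $\prod_v e_v$ equals $4$ for $k,k'$ (ramification at $2$ and $p$) and $2$ for $k_0$; together with the unit indices $2,2,1$ this gives $|\Cl^G|=1$ every time. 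By the remark following \eqref{eq: chevalley}, an odd $|\Cl^G|$ forces the class number to be odd, so $h_k,h_{k'},h_{k_0}$ are odd.

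The core is part~(2). Writing $\epsilon=a+b\sqrt p$ with $a,b\in\Z$ and $a^2-pb^2=1$, the identities $(\sqrt\epsilon\pm\sqrt{\bar\epsilon})^2=2(a\pm1)$ and $\sqrt{2(a+1)}\,\sqrt{2(a-1)}=2b\sqrt p$ show $2\sqrt\epsilon=\sqrt{2(a+1)}+\sqrt{2(a-1)}$, so $\sqrt\epsilon\in F$ provided $2(a+1)\in\{1,2,p,2p\}(\Q^\times)^2$; this in turn follows from $(a-1)(a+1)=pb^2$ by a short parity analysis (the two factors are coprime up to a power of $2$, and $p$ divides exactly one of them). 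The identical computation applied to $\epsilon'=a'+b'\sqrt{2p}$ gives $\sqrt{\epsilon'}\in F$, the only change being that $a'$ is forced odd and the prime $2$ must be tracked with a little more care. Now set $V=\co_k^\times\co_{k'}^\times\co_{k_0}^\times=\langle -1,\epsilon,\epsilon',1+\sqrt2\rangle$. Because $N_{F/\Q}(\eta)=\pm1$, the identity $\eta^2=\pm\prod_i N_{F/k_i}(\eta)$ shows $\eta^2\in V$ for every $\eta\in\co_F^\times$; hence $\co_F^\times/V$ is an elementary abelian $2$-group, and $\eta V\mapsto \eta^2V^2$ embeds it into $V/V^2\cong(\F_2)^4$, with image the subgroup $U$ of classes represented by squares in $F$. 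Thus $[\co_F^\times:V]=|U|$, and the two square roots just produced show that the classes of $\epsilon$ and $\epsilon'$ lie in $U$, so $|U|\ge 4$.

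It remains to show $U$ is no larger. Modulo the classes of $\epsilon$ and $\epsilon'$, every element of $V/V^2$ is represented by one of $1,-1,1+\sqrt2,-(1+\sqrt2)$, and none of the last three is a square in $F$: since $F$ is totally real a square must be totally positive, which excludes $-1$ and $-(1+\sqrt2)$ at once, while $1+\sqrt2=\beta^2$ would give, after applying $\sqrt2\mapsto-\sqrt2$, the relation $1-\sqrt2=(\beta')^2<0$ in $F\subset\R$, which is absurd. Hence $U$ is generated by the classes of $\epsilon$ and $\epsilon'$ and the unit index is $q(F):=[\co_F^\times:V]=4$. Since $\langle -1,\sqrt\epsilon,\sqrt{\epsilon'},1+\sqrt2\rangle$ lies in $\co_F^\times$ and already contains $V$ with index $4$, it must equal $\co_F^\times$; the multiplicative independence of $\epsilon,\epsilon',1+\sqrt2$ then yields the asserted direct product $\co_F^\times=\langle\sqrt\epsilon,\sqrt{\epsilon'},1+\sqrt2\rangle\times\{\pm1\}$. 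Finally, Kuroda's class number formula for the totally real $V_4$-field $F$ gives $h_F=\tfrac14 q(F)\,h_k h_{k'} h_{k_0}=h_k h_{k'} h_{k_0}$, a product of odd numbers, so $h_F$ is odd, completing~(1). I expect the genuinely delicate point to be the exact value of $q(F)$: both verifying $\sqrt\epsilon,\sqrt{\epsilon'}\in F$ (the case-heavy factorizations, with the prime $2$ in $k'$ the most fiddly) and excluding $q(F)=8$, where the total reality of $F$ is precisely the input that rules out any further square root involving $1+\sqrt2$ or a sign.
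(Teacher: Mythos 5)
Your proposal is correct, but it runs the logic in the opposite direction from the paper and uses different tools at both key points, so it is worth comparing. For $\sqrt{\epsilon},\sqrt{\epsilon'}\in F$ the paper does not touch the Pell equation: it uses the oddness of $h_k,h_{k'}$ to see that the prime $\fq$ of $k$ above $2$ is principal, $\fq=(\pi)$, normalizes $\pi$ so that $\epsilon=\pi^2/2$, and gets $\sqrt{\epsilon}=\pi/\sqrt{2}$ immediately; your factorization $(a-1)(a+1)=pb^2$ (and its variant for $\epsilon'$, with the parity bookkeeping you flag) reaches the same conclusion by elementary arithmetic. More substantially, the paper proves $2\nmid h_F$ \emph{first}, by applying Chevalley's formula to $F/k$: only the prime of $k$ above $2$ ramifies, so every unit of $k$ is a local norm away from $2$, hence (product formula plus Hasse's norm theorem) a global norm, making the unit index $1$; with $h_F$ odd in hand, the paper only needs the soft fact that $[\co^\times_F:\langle -1,\epsilon,\epsilon',1+\sqrt{2}\rangle]$ is a power of $2$ (the $r$-th root/relative norm argument) to squeeze the index to exactly $4$ out of Kuroda's formula. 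You reverse this: you compute the index exactly $4$ by unit-theoretic means --- $\eta^2\in V$ via $\prod_i N_{F/k_i}(\eta)=\eta^2 N_{F/\Q}(\eta)$, the embedding $\co^\times_F/V\hookrightarrow V/V^2$, and total positivity ruling out square roots of $-1$ and $\pm(1+\sqrt{2})$ --- and then read off the oddness of $h_F$ from Kuroda. Both directions are valid and non-circular. The paper's route buys the explicit generators $\sqrt{\epsilon}=\pi/\sqrt{2}$, $\sqrt{\epsilon'}=\pi'/\sqrt{2}$, which is precisely the form reused in the following lemma for the local norm computations at the primes above $p$, and it never has to decide which classes in $V/V^2$ are squares; your route buys independence from class field theory on the $h_F$ side and pins down the square classes exactly rather than inferring the index from class-number parity. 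The gaps you leave (injectivity of $\eta V\mapsto\eta^2V^2$, multiplicative independence of $\epsilon,\epsilon',1+\sqrt{2}$ via norms to the quadratic subfields, and the mod $8$ argument forcing the coefficient $b'$ even) are all routine to fill.
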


\begin{proof}
(1) The oddness of $h_{k},h_{k'},h_{k_0}$  is well-known.  Alternately, it is   easy to prove this  by applying Chevalley's formula.  We leave it to the readers.

Note that the only ramified prime in $F/k$ is the unique prime ideal of $k$ above $2$.
Take  $u \in \co^\times_{k}$,  by local class field theory $u$ is a local norm except at $2$.  However by the  Artin reciprocity law (or the product formula),  $u$ must be a   norm at $2$. Then  $u\in N_{F/k}(K^\times)$ by Hasse's norm theorem. Applying Chevalley's formula \eqref{eq: chevalley} to $F/k$  shows that $2\nmid h_F$.

(2) We first show that $\sqrt{\epsilon},\sqrt{\epsilon'} \in F$.  Let $\fq$ (resp. $\fq'$) be the unique prime ideal of $k$ (resp. $k'$) above $2$. Since both $h$ and $h'$ are odd and $2\co_k=\pi^2\co_k$ and $2\co_{k'}={\pi'}^2\co_k$. Then  $\frac{\pi^2}{2}\in \co^\times_{k}$ and $\frac{{\pi'}^2}{2}\in \co^\times_{k'}$. We may choose totally positive  generators  $\pi$ and $\pi'$   such that  $\epsilon=\frac{\pi^2}{2}$ and  $\epsilon'=\frac{{\pi'}^2}{2}$.  Thus  $\sqrt{\epsilon}=\frac{\pi}{\sqrt{2}}, \sqrt{\epsilon'}=\frac{\pi'}{\sqrt{2}}\in \co^\times_F$.  To prove the proposition, we only need to show that $[\co^\times_K:\co^\times_{k} \co^\times_{k'} \co^\times_{k_0}]=4$.

 Kuroda's class number formula   \cite[Theorem 1]{Lemmermeyer}  gives
\[[\co^\times_F:\co^\times_{k} \co^\times_{k'} \co^\times_{k_0}]= \frac{4h_F}{ h_{k}h_{k'}h_{k_0}}.\]
Since $h_K,h_{k},h_{k'},h_{k_0}$ are odd,  it suffices to  show that the units index is a power of $2$. Suppose not, let $r$ be an odd prime divides this index. Then there exists a unit $\eta \in \co^\times_F$ such that $\eta^r=\pm \epsilon^a{\epsilon'}^b(1+\sqrt{2})^c$ and $r\nmid \gcd(a,b,c)$. Note that  $N_{F/{k}}(\eta^r)=\pm \epsilon^{2a}$, this implies $r\mid a$. Similarly, $r\mid b$ and $r\mid c$. This  contradiction shows the index is a power of $2$, as desired. \end{proof}

\begin{lem}
$(1)$ The product of ramification indices for $L/F$ is $16$.

$(2)$ The index $[\co^\times_F:N(L^\times) \cap \co^\times_F]$ is $4$.
\end{lem}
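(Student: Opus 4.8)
The plan is to analyze the quadratic extension $L/F=F(\sqrt[4]{p})/F$ by a purely local computation, in the spirit of the proof of Proposition~\ref{prop: cyclic}, and then to convert the local data into a norm index via Hasse's norm theorem. Write $L=F(\sqrt{\sqrt p})$.

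For part $(1)$ I would first locate the ramified places. Since $\sqrt p$ is a unit away from the primes above $p$, only primes dividing $2p$ can ramify. At the unique prime $\fP$ of $F$ above $2$ one has, just as in Lemma~\ref{prop: equivalent of thm1 and thm2}, $F_\fP=\Q_2(\sqrt p,\sqrt2)=\Q_2(\zeta_8)=\Q_2(\sqrt[4]{p})=L_w$, so $\fP$ splits and is unramified in $L/F$. Over $p$, the prime is ramified in $k$ and $k'$ but split in $k_0$, so $p\co_F=(\fp_1\fp_2)^2$ with $f(\fp_i)=1$; as $\sqrt p$ is a uniformizer at each $\fp_i$, adjoining $\sqrt{\sqrt p}$ ramifies both, giving $e=2$ at $\fp_1,\fp_2$. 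Finally $F$ is totally real with four real places, and a real place extends to $L$ only when $\sqrt p>0$ there, so the two places with $\sqrt p<0$ become complex, each contributing $e=2$. Hence the ramified places are $\fp_1,\fp_2$ and the two negative infinite places $\infty_1,\infty_2$, and $\prod_v e_v=2^4=16$.

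For part $(2)$, by Hasse's norm theorem a unit $u\in\co_F^\times$ lies in $N(L^\times)$ iff $(u,\sqrt p)_v=1$ at all $v$, and by the above the only possibly nontrivial symbols occur at $\fp_1,\fp_2,\infty_1,\infty_2$. Thus $\phi\colon\co_F^\times\to\{\pm1\}^4$ recording these four symbols has kernel $\co_F^\times\cap N(L^\times)$, so the index equals $\#\,\mathrm{im}(\phi)$, and the product formula confines $\mathrm{im}(\phi)$ to the order-$8$ subgroup where the product of coordinates is $1$. Using the generators $-1,\sqrt\epsilon,\sqrt{\epsilon'},1+\sqrt2$ of Proposition~\ref{prop: units of base}, the archimedean components are immediate: since $\sqrt\epsilon=\pi/\sqrt2$ and $\sqrt{\epsilon'}=\pi'/\sqrt2$ with $\pi,\pi'$ totally positive, each of $\sqrt\epsilon,\sqrt{\epsilon'},1+\sqrt2$ is positive at $\infty_1$ and negative at $\infty_2$, while $-1$ is negative at both. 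In particular $-1\notin N(L^\times)$, and $\sqrt\epsilon$ is independent of $-1$ modulo norms, so the index is at least $4$.

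The finite symbols carry the real content. At $\fp_i$ (residue field $\F_p$, uniformizer $\sqrt p$) the tame symbol formula gives $(u,\sqrt p)_{\fp_i}=\leg{\bar u}{p}$ for a unit $u$. Writing $\pi=a+b\sqrt p$ with $a^2-pb^2=2$ (so $N(\pi)=2$), reduction mod $\fp_i$ yields $\bar\pi\equiv\pm\sqrt2$, hence $\sqrt\epsilon\equiv\pm1$ and likewise $\sqrt{\epsilon'}\equiv\pm1$, while $1+\sqrt2\equiv1\pm\sqrt2$; here $\leg{-1}{p}=-1$ and $2$ is a fourth power modulo $p$ (as $p\equiv7\bmod8$ forces fourth powers to coincide with squares and $\leg2p=1$). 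A short count in $\{\pm1\}^4$ then shows that the index is $4$ precisely when $\sqrt\epsilon,\sqrt{\epsilon'},1+\sqrt2$ carry the \emph{same} symbol at $\fp_1$—equivalently, are simultaneously local norms at one prime above $p$—and is $8$ otherwise. The main obstacle is exactly to prove this coincidence of residue symbols; this is where $p\equiv-1\bmod16$ is used in full, through the fourth-power character of $2$, the relations $a^2-pb^2=2$ and ${a'}^2-2p{b'}^2=2$ together with quadratic reciprocity, and the complete splitting of $p$ in $\Q(\zeta_{16})^+$. It is this coincidence that forces the index to be $4$ rather than $8$, and hence (via Chevalley's formula) delivers $2\mid h_L$ in the proof of Theorem~\ref{thm: 4_divides pure}.
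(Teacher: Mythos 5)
Your part (1) is correct and is essentially the paper's argument: unramified above $2$ because $\Q_2(\sqrt[4]{p},\sqrt2)=\Q_2(\zeta_8)$, ramified exactly at the two primes $\fp_1,\fp_2$ over $p$ and at the two real places where $\sqrt p<0$, giving $\prod_v e_v=16$. Your framework for part (2) is also sound, and in fact your structural claim checks out: writing $s_i$ for the tame symbols of $\sqrt\epsilon,\sqrt{\epsilon'},1+\sqrt2$ at $\fp_1$, the symbol vectors at $(\fp_1,\fp_2,\infty_1,\infty_2)$ are $(-1,-1,-1,-1)$ for $-1$ and $(s_i,-s_i,-1,+1)$ for the three fundamental units (each unit is sent to minus itself, up to squares and inverses, by $\sqrt2\mapsto-\sqrt2$, and $\leg{-1}{p}=-1$), so the image of your map $\phi$ has order $4$ if $s_1=s_2=s_3$ and order $8$ otherwise.

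The problem is that you stop exactly there: you name the coincidence $s_1=s_2=s_3$ as ``the main obstacle'' and list the tools that should prove it (fourth-power character of $2$, the norm equations, reciprocity, splitting in $\Q(\zeta_{16})^{+}$), but you never carry out the computation. That coincidence \emph{is} the lemma; it is the only place where $p\equiv-1\bmod16$ (rather than merely $p\equiv7\bmod8$) enters, and without it the index could be $8$, which would break the proof of Theorem~\ref{thm: 4_divides pure}. Note that your remark that $2$ is a fourth power mod $p$ does not settle it: the issue is whether the \emph{specific} square root $a$ of $2$ (from $\pi=a+b\sqrt p$, $a^2-pb^2=2$) is the quadratic residue one, and since $\leg{-1}{p}=-1$ exactly one of $\pm a$ is. The paper resolves this in the subsequent lemma (Lemma~\ref{units are local squares}): from $a^2-pb^2=2$ one gets $\leg{2}{b}=1$, hence $b\equiv\pm1\bmod 8$, hence $a^2=2+pb^2\equiv1\bmod16$ using $p\equiv-1\bmod16$, so $\leg{2}{a}=1$ and then $\leg{a}{p}=\leg{-p}{a}=\leg{2}{a}=1$ by reciprocity; an analogous computation handles $\pi'=c+d\sqrt{2p}$, and $2\pm\sqrt2\in(\Q_p^\times)^2$ follows from $\zeta_{16}+\zeta_{16}^{-1}\in\Q_p$ (complete splitting of $p$ in $\Q(\zeta_{16}+\zeta_{16}^{-1})$) together with $(2+\sqrt2)(2-\sqrt2)=2\in(\Q_p^\times)^2$. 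With Hensel's lemma these give exactly $s_1=s_2=s_3$. (The paper packages this slightly differently---it exhibits the explicit index-$4$ subgroup $\bigl\langle(2+\sqrt2)/\pi,\ (2+\sqrt2)/\pi',\ (1+\sqrt2)^2\bigr\rangle$ of totally positive units that are local squares at $\fp_1,\fp_2$, hence norms by Hasse's theorem---but the arithmetic content is identical to what you left unproved.)
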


\begin{proof}

(1) Obviously  $L/F$ is unramified outside the places above $2,p$ and the infinite places. In fact    $L/F$ is   unramified at the prime ideals  above $2$, because  \[\Q_2(\sqrt[4]{p},\sqrt{2})=\Q_2(\sqrt{p},\sqrt{2})=\Q_2(\zeta_8).\]
On the other hand, the  places  $\fp_1,\fp_2, \infty_1$  and $\infty_2$ are clearly ramified,  where $\fp_1,\fp_2$ are the two  prime ideals of $F$ above $p$ and  $\infty_1,\infty_2$  are the two real embeddings such that $\infty_i(\sqrt{p})<0$ and $\infty_i(\sqrt{2})=(-1)^i\sqrt{2}$ for $i=1,2$. This proves (1).

(2) As in the  proof of the above proposition. We let $\fq$ (resp. $\fq'$)  be the unique prime ideal  of $k$ (resp. $k'$) above $2$. Let  $\pi$ (resp. $\pi'$) be the  totally positive generators of  $\fq$ (resp. $\fq'$) such that $\frac{\pi^2}{2}$  (resp. $\frac{{\pi'}^2}{2}$)  are the fundamental unit of $k$ (resp. $k'$).  Then by the above proposition, 
\[\co^\times_F=\left\langle \frac{\pi_1}{\sqrt{2}},\frac{\pi_2}{\sqrt{2}},1+\sqrt{2},-1\right\rangle.\]

Since  $-1, \pm(1+\sqrt{2})$ are negative at $\infty_1$ or $\infty_2$,  they are not norms at $\infty_1$ or $\infty_2$ and then   not norms of $L$.  This shows  the index  $[\co^\times_F:N(L^\times) \cap \co^\times_F]\geq 4$. 

Now we go to show that
\[\left \langle \frac{\sqrt{2}(1+\sqrt{2})}{\pi_1},\frac{\sqrt{2}(1+\sqrt{2})}{\pi_2},(1+\sqrt{2})^2 \right\rangle \subset N(L^\times).\]
 Note that the left side is  a subgroup of $\co^\times_F$ with index $4$, so this would imply the desired results. Because the above units are totally positive, so they are norms at $\infty_1$ and $\infty_2$. For $\fp_1$ and $\fp_2$,  note that the localization of $F$ at $\fp_i$ ($i=1,2$) is $\Q_p(\sqrt{p})$, thus the proposition follows from  the following  lemma and Hasse's norm theorem.
\end{proof}

\begin{lem}\label{units are local squares}

	$(1)$ The elements  $ 2\pm \sqrt{2} $ are  squares in $ \Q^\times_p$.
	
	$(2)$ The elements  $\pi$ and $\pi'$ are squares in $\Q_p(\sqrt{p})^\times$.

\end{lem}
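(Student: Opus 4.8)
The plan is to read both statements as assertions about the local field $\Q_p(\sqrt p)$ (and its subfield $\Q_p$), exploiting that $p\equiv -1\bmod 16$ forces $p\equiv 7\bmod 8$, so $\leg{2}{p}=1$ and hence $\sqrt 2\in\Q_p$. Since $p$ ramifies in $\Q(\sqrt p)$, the completion $\Q_p(\sqrt p)$ is a ramified quadratic extension of $\Q_p$ with residue field $\F_p$ and uniformizer $\sqrt p$; by Hensel's lemma a unit of $\Q_p(\sqrt p)$ is a square exactly when its image in $\F_p^\times$ is a square. This reduces every claim to a Legendre-symbol computation, which I would carry out using $\leg{2}{p}=1$ and quadratic reciprocity.

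For part $(1)$ I would argue cyclotomically. The element $\sqrt{2+\sqrt 2}=2\cos(\pi/8)=\zeta_{16}+\zeta_{16}^{-1}$ lies in the maximal real subfield $\Q(\zeta_{16})^+$, and $\sqrt 2=\zeta_8+\zeta_8^{-1}$ lies there as well, so $\sqrt{2-\sqrt2}=\sqrt2/\sqrt{2+\sqrt2}\in\Q(\zeta_{16})^+$ too. Because $p\equiv-1\bmod 16$, the Frobenius at $p$ in $\Gal(\Q(\zeta_{16})/\Q)\cong(\Z/16)^\times$ is complex conjugation, whose fixed field is precisely $\Q(\zeta_{16})^+$; hence $p$ splits completely in $\Q(\zeta_{16})^+$, and this field embeds into $\Q_p$. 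Therefore $\sqrt{2\pm\sqrt2}\in\Q_p$, i.e. $2\pm\sqrt2$ are squares in $\Q_p^\times$. Equivalently, one shows $2+\sqrt2$ is a square and then $2-\sqrt2=2/(2+\sqrt2)$ is one as well, since $2=(\sqrt2)^2$.

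For part $(2)$ I would use the defining relations $\pi^2=2\epsilon$, $\pi'^2=2\epsilon'$ together with $N(\pi)=N(\pi')=2$ over $\Q$; writing $\pi=a+b\sqrt p$ and $\pi'=a'+b'\sqrt{2p}$ this gives $a^2-pb^2=2$ and $a'^2-2pb'^2=2$ with $a,a'$ odd and prime to $p$, so $\pi,\pi'$ are units of $\Q_p(\sqrt p)$ with residues $a,a'\bmod p$. By the reduction above I must show $\leg{a}{p}=\leg{a'}{p}=1$. For $\pi'$ this is immediate: from $2pb'^2=a'^2-2\equiv-2\pmod{a'}$ one gets $pb'^2\equiv-1\pmod{a'}$, whence $\leg{p}{a'}=\leg{-1}{a'}$, and quadratic reciprocity (using $p\equiv3\bmod4$) collapses $\leg{a'}{p}$ to $1$. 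For $\pi$ the same manipulation gives $pb^2\equiv-2\pmod a$ and, after reciprocity, $\leg{a}{p}=\leg{2}{a}$, so it remains to prove $a\equiv\pm1\bmod 8$.

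The main obstacle is exactly this last congruence. A purely local analysis of $a^2-pb^2=2$ does not suffice, since over $\Z_2$ there are norm-$2$ solutions with $a\equiv 3\bmod 8$; one must feed in that $b$ is a rational integer and that $\epsilon$ is the fundamental unit. I expect to establish $a\equiv\pm1\bmod8$ — equivalently $b\equiv\pm1\bmod8$, as $a^2-pb^2=2$ with $p\equiv15\bmod16$ forces $a^2+b^2\equiv2$ and hence $a^2\equiv b^2\bmod16$ — by a descent in $\Z[\sqrt2]$. Since $p\equiv7\bmod8$ splits there as $p=\fp\bar{\fp}$ and $\Z[\sqrt2]$ is a PID, the factorization $a^2-2=(a-\sqrt2)(a+\sqrt2)=pb^2$ into coprime factors yields $a+\sqrt2=\pm(1+\sqrt2)^k\rho\gamma^2$ with $(\rho)=\fp$; reducing this modulo $\bar{\fp}$ and taking Legendre symbols expresses $\leg{a}{p}$ through residue symbols of $1+\sqrt2$ and $\rho$, evaluable by rational quadratic reciprocity. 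It is here that $p\equiv-1\bmod16$ (rather than merely $p\equiv-1\bmod8$) must enter to force the product to equal $+1$, and controlling the unit exponent $k$ together with the various signs is the delicate point.
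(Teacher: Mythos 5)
Your part (1) is correct and is essentially the paper's own argument: both proofs come down to the fact that $p\equiv-1\bmod 16$ makes $p$ split completely in $\Q(\zeta_{16}+\zeta_{16}^{-1})$, so that $\zeta_{16}+\zeta_{16}^{-1}\in\Q_p$ and $(\zeta_{16}+\zeta_{16}^{-1})^2=2\pm\sqrt2$, the other sign being handled via $(2+\sqrt2)(2-\sqrt2)=2$. In part (2), however, there are two problems. A small but real one: for $\pi'=c+d\sqrt{2p}$ the norm equation reads $c^2=2+2pd^2$, so $c$ is \emph{even}; your assertion that $a'$ is odd is false, and the manipulation ``$pb'^2\equiv-1\pmod{a'}$, hence $\leg{p}{a'}=\leg{-1}{a'}$'' is undefined as written ($2$ is not invertible modulo an even number, and Jacobi symbols need an odd lower entry). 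This is repairable exactly as the paper does: write $c=2^wc'$ with $c'$ odd and positive, use $\leg{2}{p}=1$, and run your argument with $c'$ in place of $a'$.

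The genuine gap is the step you yourself flag as delicate: proving $a\equiv\pm1\bmod 8$ (equivalently $\leg{2}{a}=1$) for $\pi=a+b\sqrt p$. Your diagnosis is right that a purely $2$-adic analysis cannot give this and that global integrality must enter, but the descent in $\Z[\sqrt2]$ you propose is never carried out: the unit exponent $k$, the signs, and the precise point where $p\equiv-1\bmod 16$ enters are exactly the content of the claim, and they are left unresolved. The paper needs none of this machinery (in particular it never uses that $\epsilon$ is the fundamental unit here, only the equation $a^2-pb^2=2$): reduce that equation modulo $b$ rather than modulo $a$. It gives $a^2\equiv 2\pmod{b}$, so $2$ is a square modulo every prime divisor of $b$, hence the Jacobi symbol $\leg{2}{|b|}=1$, i.e. $b\equiv\pm1\bmod 8$ and $b^2\equiv 1\bmod 16$. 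Then $a^2=2+pb^2\equiv 2+p\equiv 1\bmod 16$ (this is where $p\equiv-1\bmod 16$ is used), so $a\equiv\pm1\bmod 8$, giving $\leg{a}{p}=\leg{2}{a}=1$ and, by Hensel, that $\pi$ is a square in $\Q_p(\sqrt p)^\times$. So the missing congruence follows from the equation you already wrote down, read modulo $b$; no factorization in $\Z[\sqrt2]$, control of units, or sign bookkeeping is needed.
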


\begin{proof}
	
(1)Note that $(2+\sqrt{2})(2-\sqrt{2})=2$ is a square in $\Q^\times_p$.   So $2+\sqrt{2}\in (\Q^\times_p)^2$ if and only if $2-\sqrt{2}\in (\Q^\times_p)^2$.   Since $p\equiv -1 \bmod 16$, 
 $p$ splits completely in $\Q(\zeta_{16}+\zeta^{-1}_{16})$.  This implies that  $\zeta_{16}+\zeta^{-1}_{16} \in \Q_p$. Note that $(\zeta_{16}+\zeta^{-1}_{16})^2=\zeta_8+\zeta^{-1}_8+2=2+\sqrt{2}$ or $2-\sqrt{2}$.
	
(2) Write $\pi=a+b\sqrt{p}$ with $a \in \Z_{\geq 1},b \in \Z$. Then $a^2-pb^2=2$ and  $2\nmid ab$. By the quadratic reciprocity law for Jacobi symbols,
\[\leg{a}{p}=\leg{-p}{a}=\leg{2}{a}.\]
Note that $a^2\equiv 2 \bmod b$, in particular $\leg{2}{b}=1$. Hence  $b\equiv \pm 1\bmod 8$ and then  $b^2\equiv 1 \bmod 16$. Thus $a^2 = 2+pb^2 \equiv 1 \bmod 16$ and $\leg{2}{a}=1$. This implies that $\leg{a}{p}=1$. Hence  $\pi_1 \bmod \sqrt{p}$ is a square in $\Z_p[\sqrt{p}]/(\sqrt{p})$. By Hensel's lemma, $\pi_1$ is a square  in the local field $\Q_p(\sqrt{p}).$

Note that $\Q_p(\sqrt{p})=\Q_p(\sqrt{2p})$.
 Write $\pi'=c+d\sqrt{2p}$ with $c\in \Z_{\geq 1}, d\in \Z$. By Hensel's lemma, it is enough to prove that $\pi'$ is a square modulo $\sqrt{2p}$, or equivalently $c$ is a square modulo $p$.  Write $c=2^w c'$ with $2\nmid c'$. From the identity $c^2-2pd^2=2$, one has
\[\leg{c}{p}=\leg{2^wc'}{p}=\leg{c'}{p}=\leg{-p}{c'}=\leg{1}{c'}=1.\]   \end{proof}

We are now ready to prove the Theorem~\ref{thm: 4_divides pure}.
\begin{proof}[Proof of the Theorem~\ref{thm: 4_divides pure}]
 The Proposition \ref{prop: cyclic} proves the cyclic property.   The above two propositions and  Chevalley's formula \eqref{eq: chevalley} show that
	$|\Cl^{\Gal(L/K)}_L| =2.$ In particular, $2\mid h_L$.
	By Proposition \ref{prop: equivalent of thm1 and thm2},  we have $4\mid h_K$. This completes the proof.
\end{proof}

Now we go to prove Theorem~\ref{thm: 4_hilbert class}.
By Lemma~\ref{prop: equivalent of thm1 and thm2}, we see that $L$ is the $2$-Hilbert class field of $K$. To construct the $4$-Hilbert class field, we need the following results on the field  $K'=\Q(\sqrt{2\sqrt{p}})$.

\begin{prop} \label{prop: units of K'}The following statements are true:
	
$(1)$ The unique prime ideal $\pi\co_k$  above $2$ in $k$  splits in $K'/k$.
	
$(2)$ The class number of $K'$ is odd.	

$(3)$ There exists a totally positive unit $\xi$ of $\co^\times_{K'}$ such that $\co^\times_{K'}=\langle \xi,\bar{\xi},-1\rangle$ and $\N_{K'/k}(\xi)= \xi \bar{\xi}=\epsilon$.  
\end{prop}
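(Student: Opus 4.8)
The plan is to handle the three parts in order: (1) is a purely local computation at the prime above $2$, (2) is an application of Chevalley's formula \eqref{eq: chevalley} to $K'/k$, and the real work is in (3), where the point is to produce a \emph{unit} of relative norm $\epsilon$ and then pin down the full unit group.

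For (1) I would argue locally at $\fq=(\pi)$. Since $p\equiv -1\bmod 16$ we have $-p\equiv 1\bmod 16$, so $-p$ is a fourth power in $\Q_2^\times$; in particular $s:=\sqrt{-p}\in\Q_2^\times$, and $k_{\fq}=\Q_2(\sqrt p)=\Q_2(\sqrt{-1})=\Q_2(i)$, a ramified quadratic extension with uniformizer $1+i$ satisfying $(1+i)^2=2i$. Writing $s=\pm t^2$ with $t=\sqrt[4]{-p}\in\Q_2^\times$ and $\sqrt p=\pm is$, one gets $2\sqrt p=\pm(1+i)^2 s$, which is a square in $k_{\fq}$ (recall $-1=i^2$). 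Hence $2\sqrt p\in (k_{\fq}^\times)^2$, so $\fq$ splits in $K'=k(\sqrt{2\sqrt p})$.

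For (2) I would apply Chevalley's formula \eqref{eq: chevalley} to $K'/k$ with $G=\Gal(K'/k)$. By (1) the prime above $2$ splits, and the only ramified places are $\fp=(\sqrt p)$ (where $2\sqrt p$ has odd valuation) and the real place of $k$ with $\sqrt p<0$ (which becomes complex in $K'$); thus $\prod_v e_v=4$. For the unit index I would show $[\co^\times_k:\co^\times_k\cap N(K'^\times)]=2$: the ramified real place gives $-1\notin N(K'^\times)$, while $\epsilon$ is a norm because it is a local norm everywhere — trivially at the split $\fq$, at the complex place because $\epsilon=\pi^2/2$ is totally positive, and at $\fp$ because $\epsilon\equiv 1\bmod\fp$ (as $\pi$ is totally positive, $\N_{k/\Q}(\pi)=+2$, so $\pi\equiv a\bmod\fp$ with $a^2\equiv 2\bmod p$ and $\leg{2}{p}=1$). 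Chevalley's formula then yields $|\Cl_{K'}^G|=h_k\cdot\tfrac{4}{2}\cdot\tfrac12=h_k$, which is odd, and as in the discussion following \eqref{eq: chevalley} this forces $h_{K'}$ odd.

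For (3), from the proof of (2) I already have $\epsilon=N_{K'/k}(\beta)$ for some $\beta\in K'^\times$, so the ideal $(\beta)$ has trivial relative norm. Since $I_{K'}$ is a sum of induced $G$-modules, $\hat H^{-1}(G,I_{K'})=0$, whence $(\beta)=\fP^{1-\sigma}$ for an ideal $\fP$ whose class lies in $\Cl_{K'}^G$. Because $h_{K'}$ is odd, $\Cl_{K'}^G$ has odd order, while the cokernel of (classes of $\sigma$-invariant ideals)$\to\Cl_{K'}^G$ embeds into the $2$-group $\hat H^0(G,\co_{K'}^\times)$; being simultaneously an odd-order quotient of $\Cl_{K'}^G$ and a subgroup of a $2$-group, it is trivial. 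Hence $\fP=(\mu)\,\fc$ with $\fc$ $\sigma$-invariant, so $(\beta)=(\mu/\sigma\mu)$ and $\beta=u\cdot\mu/\sigma\mu$ for a unit $u$ with $N_{K'/k}(u)=\epsilon$. Finally, since $-1\notin N(K'^\times)$ one gets $N_{K'/k}(\co_{K'}^\times)=\langle\epsilon\rangle$, and the logarithmic embedding shows $\co_{K'}^\times$ has rank $2$ with $\sigma$ swapping the two real places and fixing the complex one, giving a trivial eigenline spanned by $\log\epsilon$ and a sign eigenline spanned by $\log\delta$ for a fundamental norm-one unit $\delta$. Any unit of norm $\epsilon$ is a ``glue'' vector $\tfrac12(\log\epsilon+m\log\delta)$ with $m$ odd, so after replacing $u$ by $u\delta^k$ I may take $m=\pm1$; then $\{\log\xi,\log\overline\xi\}$ is a $\Z$-basis of the unit lattice, so $\co_{K'}^\times=\langle\xi,\overline\xi,-1\rangle$, and replacing $\xi$ by $-\xi$ if needed makes it totally positive (the relation $\xi\overline\xi=\epsilon$ with $\epsilon$ totally positive and $\sigma$ swapping the real places forces equal signs at the two real places). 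The main obstacle is exactly this descent from a global norm to a \emph{unit} norm: everything hinges on $h_{K'}$ being odd, which is what makes the odd-order ambiguous-class obstruction vanish inside a $2$-group; without it the descent can fail and $\epsilon$ itself need not be a unit norm.
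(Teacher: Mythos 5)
Your parts (1) and (2) are correct and essentially the paper's own argument: the same identification $\Q_2(\sqrt p)=\Q_2(\sqrt{-1})$ with $\sqrt[4]{-p}\in\Q_2$ for the splitting, and the same Chevalley count $|\Cl_{K'}^{G}|=h_k\cdot\tfrac{4}{2}\cdot\tfrac{1}{2}=h_k$ (you check that $\epsilon$ is a local norm at $\fp$ by a residue/Hensel computation where the paper invokes the product formula; both are fine). In part (3), your descent from ``$\epsilon$ is a global norm'' to ``$\epsilon$ is the norm of a unit'' --- via $\hat H^{-1}(G,I_{K'})=0$ and the fact that the cokernel of $I_{K'}^{G}\to\Cl_{K'}^{G}$ injects into the $2$-group $\hat H^{0}(G,\co^\times_{K'})$ while being a quotient of the odd-order group $\Cl_{K'}^{G}$ --- is correct and self-contained; this is exactly the statement the paper outsources to Greenberg's Proposition 1.3.4, so here you have genuinely replaced a citation by a proof. (Minor imprecision: $I_{K'}$ is a sum of permutation modules $\Z[G/G_{\fP}]$, not of modules induced from the trivial subgroup; the vanishing follows from Shapiro's lemma together with $\hat H^{-1}(H,\Z)=0$ for any finite $H$.)

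The gap is in the final lattice step. You assert, with no argument, that a unit $u$ of norm $\epsilon$ has $\log u=\tfrac12(\log\epsilon+m\log\delta)$ with $m$ \emph{odd}, and that $\{\log\xi,\log\bar\xi\}$ is then a $\Z$-basis of the unit lattice. Neither is formal: both amount to excluding $\pm\epsilon\in(\co^\times_{K'})^2$, and this exclusion is precisely the field-specific work that the paper does at this point (it proves $\co^\times_{K'}/\co^\times_k$ is torsion-free, using that $K'/k$ is ramified at $\sqrt p\,\co_k$ whereas adjoining the square root of a unit of $k$ gives an extension unramified there). If this exclusion fails, the lattice $\Z\log\xi+\Z\log\bar\xi$ can have index $2$ in the full unit lattice and the proposition's conclusion is false, so a proof must use something here. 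The repair is short with facts you already have on hand. First, if $m$ were even, then $v=u\delta^{-m/2}$ satisfies $N_{K'/k}(v)=\epsilon$ and $\sigma v=\pm v$; the case $\sigma v=v$ gives $v\in\co^\times_k$ and $\epsilon=v^2$, contradicting fundamentality of $\epsilon$, and the case $\sigma v=-v$ gives $v^2=-\epsilon$, impossible at a real place of $K'$ because $\epsilon$ is totally positive. Second, for the basis claim one must also rule out $w^2=\epsilon$ with $w\in\co^\times_{K'}$ (here $w$ need not lie in $k$ a priori): from $N_{K'/k}(w)^2=\epsilon^2$ and $-\epsilon\notin N_{K'/k}(K'^\times)$ (since $-1$ is not a norm while $\epsilon$ is) one gets $N_{K'/k}(w)=\epsilon=w^2$, hence $\sigma w=w$, $w\in\co^\times_k$, again contradicting fundamentality; likewise $w^2=\pm\delta$ is impossible since then $N_{K'/k}(w)=1$ forces $w\in\langle-1,\delta\rangle$. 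With these exclusions inserted (or by quoting the paper's ramification argument for torsion-freeness), your proof of (3) is complete.
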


\begin{proof}
For (1), note that  $\Q_2(\sqrt{p})=\Q_2(\sqrt{-1})$ and $\sqrt[4]{-p}\in \Q_2$. Then $\pi\co_k$ splits in $K'$ follows from $x^2-2\sqrt{p}=x^2-(1+\sqrt{-1})^2\sqrt{-p}=(x-(1+\sqrt{-1})\sqrt[4]{-p}))(x+(1+\sqrt{-1}\sqrt[4]{-p})) \text{ in } \Q_2(\sqrt{p})[x].$

For (2), note that the   places of $k$  ramified in $K'$ are $\sqrt{p}\co_k$ and the infinite place which sends $\sqrt{p}$ to $-\sqrt{p}\in \R$. Then   $-1\not\in \N(K'^\times)$. Since $\epsilon=\frac{\pi^2}{2}$ is totally positive, it is a norm at the infinite places. By product formula, $\epsilon$ must also be a norm at $\sqrt{p}\co_k$.  Then $\epsilon\in N(K'^\times)$.   By Chevalley's formula,  the class number of $K'$ is odd.

For (3), by \cite[Proposition 1.3.4]{Gre} and (2), we have $\epsilon \in N(\co^\times_{K'})$. Note that $\co^\times_{K'}/{\co^\times_k}$ is an abelian group of rank $1$. We claim that it is torsion-free. Otherwise, there exists $u\in \co^\times_{K'}\setminus \co^\times_k$ such that $u^j\in \co^\times_{K'}$ for some $j\geq 2$.  Then $K'=k(u)$.   The conjugate element of $u$ is $\zeta u$ for some $\zeta\in \langle \zeta_j \rangle \cap K'$ where $\zeta_j$ is a $j$-th primitive root of unity.  So $\zeta=\pm 1$ and   $N(u)=u \zeta u=\pm u^2\in \co^\times_k$.  This implies that  $K'/k$ is unramified at $p$. This is a contradiction. Hence we prove  the claim. 

Take $\eta \in \co^\times_{K'}$ such that it is a generator in $\co^\times_{K'}/{\co^\times_k}.$ We then have $\co^\times_{K'}=\langle -1,\eta,\epsilon\rangle.$  Since $\epsilon \in N(\co^\times_{K'})$,  the norm of $\eta$ must be an odd power of $\epsilon$, say $N(\eta)=\epsilon^{2k+1}$. Put  $\xi=\mathrm{sgn}(\eta)\eta\epsilon^{-k}$. Then $\xi$ is totally positive and  $N(\xi)=\epsilon$. Hence
 $\co^\times_{K'}=\langle -1,\xi,\epsilon\rangle= \langle -1,\xi,\bar{\xi}\rangle$.  
\end{proof}

\begin{proof}[Proof of Theorem~\ref{thm: 4_hilbert class}]
Let $M$ be the $4$-Hilbert class field of $K$. 
Since  $K/k$ is Galois,  $M/k$ is also Galois. By class field theory, one has
\[\Gal(M/K)\cong \Cl_K/{4 \Cl_K}  \quad \text{  as } \Gal(K/k) \text{-modules.}\]
 Here $\Gal(K/k):=\{1,\sigma\}$ acts on $\Gal(M/K)$ by conjugation.  Let $A_K$ be the $2$-primary part of $\Cl_K$.  Given an ideal class $a\in A_K$, we have $\N_{K/k}(a)=a \sigma(a)=1$  as $h_{k}$ is odd.  In other words, $\sigma(a)=a^{-1}$.  Hence $\Gal(M/k)$ is isomorphic to $D_4$, the dihedral group of order $8$. We then have a diagram of fields by Galois theory.
 \[\begin{tikzcd}[row sep=large, column sep=large]
& & M\ar[lld,dash] \ar[ld,dash]  \ar[d,dash] \ar[rd,dash] \ar[rrd,dash]& & \\
F(\sqrt{\bar{\gamma}})&F(\sqrt{\gamma}) & L& K'(\sqrt{\beta})& K'(\sqrt{\bar{\beta}})\\
& F\ar[lu,dash] \ar[u,dash] \ar[ru,dash]& K \ar[u,dash]& K' \ar[lu,dash] \ar[u,dash] \ar[ru,dash]&\\
& & k\ar[lu,dash] \ar[u,dash] \ar[ru,dash] & & \\
 \end{tikzcd} \]
 Here $\beta$ is an element of ${K'}$ and $\bar{\beta}$ is the conjugate element of $\beta$ respect to the extension $K'/k$. Similarly,  $\gamma$ is an element of ${K}$ and $\bar{\gamma}$ is the conjugate element of $\gamma$ respect to the extension $K/k$. Note that $L=K'(\sqrt{\beta \bar{\beta}})=k(\sqrt{2\sqrt{p}}, \sqrt{\beta\bar{\beta}})$ and $k(\sqrt{\beta \bar{\beta}})=F$ or $K$.

  Since the class number $h'$ of $K'$ is odd,  we have $K'(\sqrt{\beta^{h'}})=K'(\sqrt{\beta})$. Let $\beta\co_{K'}=\fp^{a_1}_1\cdots \fp^{a_k}_k$ be the prime ideals factorization. Then $\beta':=\beta^{h'}=u\varpi^{a_1}_1\cdots \varpi^{a_k}_k$ where $u\in \co^\times_{K'}$ and $ \varpi_i\in \co_{K'}$ is a generator of $\fp^{h'}_i$ for each $i$.  We may assume  $a_i=0$ or $1$ for each $i$.  Since $F/k$ is unramified outside $2$,  so are $L/K'$ and $M/K'$. In particular,  $K'(\sqrt{\beta})/K'$ is unramified outside the prime ideals above $2$. This implies  $\beta'\co_{K'}= (\eta_1), (\eta_2), (\eta_1 \eta_2)$ or $\co_{K'}$.  Here $\eta_i \co_{K'}={\fq_i}^{h'}$ ($i=1,2$) and $\fq_i$ is the prime ideal above $\pi\co_{k}$. (Recall that $\pi\co_k$ is the unique prime above $2$ in $k$.)  Note that in each case  $k(\sqrt{\beta\bar{\beta}})/k$ is unramified at $\sqrt{p}\co_{k}$. So one must have  $k(\sqrt{\beta\bar{\beta}})=F$.

  We  claim  that $\beta'\co_{K'}=\co_{K'}$.    If $\beta'=u\eta_i$  for some unit $u$ ($i=1$ or $2$), then $\beta'\bar{\beta'}= v {\pi}^{h'}$. Because $F=k(\sqrt{\beta\bar{\beta}})=k(\sqrt{2})$, 
 $ v {\pi}^{h'}=2 t^2$  for some $t\in k^\times$ and  $v\in \co^\times_{k}$. This is a contradiction since $h'$ is odd. Hence $\beta'\neq u\eta_i$.  If $\beta'=u\eta_1\eta_2$, then both $\fq_1$ and $\fq_2$ are ramified in  the extensions  $K'(\sqrt{\beta'})/K'$, $K'(\sqrt{\bar{\beta'}})/K'$ and $ L/K'$  respectively. It follows then  $M/L$ is ramified at the primes above $2$. This is a contradiction since $M/L$ is unramified everywhere.  Therefore, we must have $\beta'$ is a unit in $\co_{K'}$.  This proves the claim.

  Since $K'(\sqrt{\beta})/K'$ is unramified at the infinite places, we have $K'(\sqrt{\beta})=K'(\sqrt{\xi}),  K'(\sqrt{\bar{\xi}})$ or $K'(\sqrt{\xi\bar{\xi}})$. 
 As $\xi\bar{\xi}=\epsilon\in k$,  this case can not happen. So $K'(\sqrt{\beta})=K'(\sqrt{\xi})$ or $K'(\sqrt{\bar{\xi}})$. It follows then $M=L(\sqrt{\xi})=LK'(\sqrt{\xi})=K(\sqrt{\xi}).$  This completes the proof of Theorem~\ref{thm: 4_hilbert class}.\end{proof}

\section{Distribution of the primes with $4\parallel h_K$}

In Section 4.1, we reprove the Theorem of Leonard-Williams \cite{LW82} on the  $16$-divisibility of the class number  $h(-2p)$ of $\Q(\sqrt{-2p})$.  In \cite{LW82}, they give a sketch of this proof by using the language of quadratic forms.  
  Milovic  says (see \cite[Page 976]{Milovic}) that he was  unable to verify the proof in \cite{LW82} and \cite[Proposition 1]{Milovic} gives a technical  proof by using $4$-Hilbert class field of $\Q(\sqrt{-2p})$.  We will follow the ideas in \cite{LW82} to prove this result in the language of ideals.  In Section 4.2, we prove Theorem~\ref{thm:density} and give some corollaries.

\subsection{$16$-divisibility  of $h(-2p)$}

\begin{lem} \label{(p) is well-defined}
  Let $p\equiv -1\bmod 8$ be a prime. For any decomposition $p=u^2-2v^2$ with $u,v \in \mathbb{N}$, the Jacobi symbol $\leg{2u}{v}$ is independent on the choices of $u$ and $v$.
\end{lem}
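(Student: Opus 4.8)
The plan is to interpret the representations $p = u^2 - 2v^2$ inside the ring $\Z[\sqrt{2}]$, which has class number $1$ and fundamental unit $\epsilon_0 = 1+\sqrt{2}$ with $\N(\epsilon_0) = -1$. First I would record two elementary facts valid for $p \equiv -1 \bmod 8$: since $p$ is odd, $u$ is odd, and reducing $2v^2 = u^2 - p$ modulo $8$ (using $u^2 \equiv 1$) gives $2v^2 \equiv 2 \bmod 8$, hence $v$ is \emph{odd}; moreover $\gcd(u,v) = 1$, since a common prime factor would divide $p$. In particular $\leg{2u}{v}$ really is a Jacobi symbol with odd denominator coprime to the numerator. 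A decomposition $p = u^2 - 2v^2$ with $u,v \in \mathbb{N}$ corresponds to a generator $u + v\sqrt{2}$ of norm $+p$ of one of the two primes $\fp, \bar{\fp}$ above $p$, lying in the first quadrant. Two such generators of the same prime differ by a unit of norm $+1$, i.e.\ by $\pm\epsilon_0^{2n}$, and positivity of the first coordinate removes the sign; so within each of $\fp, \bar{\fp}$ the first-quadrant solutions form one orbit under $T^+\colon (u,v) \mapsto (3u+4v,\,2u+3v)$ (multiplication by $\epsilon_0^2 = 3 + 2\sqrt{2}$), with neighbouring solutions related by $T^+$.

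The heart of the argument is to show that $\leg{2u}{v}$ is unchanged under $T^+$ and under the ``conjugation bridge'' $T^-\colon (u,v)\mapsto (3u-4v,\,2u-3v)$ (which is $T^+$ composed with $u+v\sqrt{2} \mapsto u - v\sqrt{2}$, and therefore carries $\fp$ to $\bar{\fp}$), whenever the image again lies in $\mathbb{N}^2$. For $T^+$, writing $(u',v') = (3u+4v,2u+3v)$ one computes $2u' = 3v' - v$ and $v' \equiv 2u \pmod v$, so that $\leg{2u'}{v'} = \leg{-v}{v'} = \leg{-1}{v'}\leg{v}{v'}$; Jacobi reciprocity together with $\leg{v'}{v} = \leg{2u}{v}$ then reduces the claim to showing that the accumulated sign $(-1)^{\frac{v'-1}{2}\frac{v+1}{2}}$ equals $+1$. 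Because $u$ and $v$ are both odd, tracking $v$ and $v' = 2u+3v$ modulo $4$ in the two cases $v \equiv 1$ and $v \equiv 3 \pmod 4$ shows this exponent is always even. The computation for $T^-$ is identical in spirit: with $(a,b) = (3u-4v,2u-3v)$ one gets $2a \equiv v \pmod b$ and $b \equiv 2u \pmod v$, and the same parity bookkeeping gives $\leg{2a}{b} = \leg{2u}{v}$.

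Finally I would assemble these into the full statement. By the $T^+$-invariance, $\leg{2u}{v}$ is constant along the first-quadrant orbit of each of $\fp$ and $\bar{\fp}$. To link the two orbits, take the minimal first-quadrant generator $\alpha = u_0 + v_0\sqrt{2}$ of $\fp$, which satisfies $\sqrt{p} < \alpha < \sqrt{p}\,\epsilon_0^2$; a short size estimate then shows that $T^-(u_0,v_0)$ again has both coordinates positive, so it is a genuine first-quadrant generator of $\bar{\fp}$, and by the $T^-$-computation it has the same symbol as $(u_0,v_0)$. Hence $\leg{2u}{v}$ takes one common value on both orbits, i.e.\ for every decomposition $p = u^2 - 2v^2$ with $u,v\in\mathbb{N}$, which is the assertion.

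I expect the main obstacle to be the sign bookkeeping in the reciprocity step rather than the structural part: one must use that both $u$ and $v$ are odd (hence that $p \equiv -1 \bmod 8$) to control $v,v' \bmod 4$ and make the quadratic-reciprocity and $\leg{-1}{\cdot}$ factors cancel. The only other point requiring care is the geometric claim that the two ideal-orbits are actually bridged \emph{within} $\mathbb{N}^2$ by a single application of $T^-$, for which the estimate $\sqrt{p} < \alpha < \sqrt{p}\,\epsilon_0^2$ on the minimal generator is exactly what is needed.
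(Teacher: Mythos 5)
Your proof is correct and takes essentially the same route as the paper's: both relate any two representations through multiplication by $(1+\sqrt{2})^{2}$ and conjugation in $\Z[\sqrt{2}]$, and both verify invariance of $\leg{2u}{v}$ under $(u,v)\mapsto(3u+4v,\,2u+3v)$ by quadratic reciprocity, using that $u,v$ are both odd when $p\equiv -1\bmod 8$. The only difference is that you additionally justify, via the minimal-generator estimate, that the two $T^+$-orbits are genuinely bridged inside $\mathbb{N}^2$ by $T^-$ --- a positivity point the paper passes over with ``it suffices to consider $k=1$'' and ``the second case can be proved similarly.''
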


\begin{proof}
  Suppose $p={u'}^2-2{v'}^2$ with $u',v'\in \mathbb{N}$. Then $u'+v'\sqrt{2}=(1+\sqrt{2})^{2k}(u+v\sqrt{2})$ or $(1+\sqrt{2})^{2k}(u-v\sqrt{2})$ for some $k\in \Z$.
  It suffices to consider  $k=1$.  In  the first case,  $u'=3u+4v$ and $v'=2u+3v$. Since $p\equiv -1\bmod 8$,  we have  that $u,v$ are odd and  that  $v\equiv 2u+3v\bmod 4$.
  Then $\leg{2u'}{v'}=\leg{-v}{2u+3v}=\leg{2u}{v}$ by the  quadratic reciprocity law. The second case can be proved similarly.
\end{proof}

Recall that we denote this Jacobi symbol $\leg{2u}{v}$ by $(p)$. 

\begin{thm}\label{thm: 16-rank formula}
  Let $p\equiv -1\bmod 8$ be a prime. Then the following statements hold:

  $(1)$  (Hasse)  $4\parallel h(-2p) \Longleftrightarrow p \equiv 7 \bmod16.$

  $(2)$  (Leonard-Williams)   $8\parallel h(-2p) \Longleftrightarrow  p\equiv -1 \bmod 16  \text{ and }(-1)^{\frac{p+1}{16}}(p)=-1.$
\end{thm}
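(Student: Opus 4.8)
The plan is to exploit the cyclicity of the $2$-primary part $A$ of $\Cl_{\Q(\sqrt{-2p})}$, which one gets from genus theory exactly as in Proposition~\ref{prop: cyclic}. Writing $\fp_2,\fp_p$ for the ramified primes above $2$ and $p$, the relation $\fp_2\fp_p=(\sqrt{-2p})$ forces $[\fp_2]=[\fp_p]$, and a short norm-form argument (the form $x^2+2py^2$ does not represent $\pm 2$) shows this class has order $2$; hence it is the unique element of order $2$ of the cyclic group $A$. Consequently, for every $m\ge 1$ one has $2^{m}\mid h(-2p)$ if and only if $[\fp_2]$ is a $2^{m-1}$-th power in $A$. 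Recalling that $4\mid h(-2p)$ holds for all $p\equiv -1\bmod 8$, part $(1)$ reduces to proving that $[\fp_2]$ is a $4$th power exactly when $p\equiv 15\bmod 16$, and part $(2)$ reduces, for $p\equiv 15\bmod 16$, to proving that $[\fp_2]$ is an $8$th power exactly when $(-1)^{\frac{p+1}{16}}(p)=1$.

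The engine for deciding when a class is a square is the unique nontrivial genus character $\chi\colon \Cl_{\Q(\sqrt{-2p})}\to\{\pm1\}$, whose kernel is precisely the subgroup of squares (the $2$-rank being $1$); for an ideal $\fa$ of odd norm coprime to $p$ one has $\chi([\fa])=\leg{-p}{\N\fa}=\leg{2}{\N\fa}$. Since $\chi([\fp_2])=1$ — which is just a restatement of $4\mid h(-2p)$ — the successive ``square roots'' I introduce below are well defined modulo the ambiguous class $[\fp_2]$, on which $\chi$ is trivial; hence the value of $\chi$ on a chosen square root does not depend on the choice, so each squaring criterion is unambiguous. I would first record the representation $p=u^2-2v^2$ with $u,v$ both odd (forced by $p\equiv -1\bmod 8$), which supplies all the arithmetic input.

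The two computational steps are then as follows. First, construct from $u,v$ an explicit ideal $\fa_1$ with $[\fa_1]^2=[\fp_2]$ and determine its norm; evaluating $\chi([\fa_1])=\leg{-p}{\N\fa_1}$ by quadratic reciprocity and reducing modulo $16$ should give $\chi([\fa_1])=1\iff p\equiv 15\bmod 16$, which is part $(1)$. Second, assuming $p\equiv 15\bmod 16$, construct $\fa_2$ with $[\fa_2]^2=[\fa_1]$ and compute $\chi([\fa_2])$: here the $2$-adic contribution to the symbol should produce the factor $(-1)^{\frac{p+1}{16}}$ (which separates $p\equiv 15$ from $p\equiv 31\bmod 32$), while the reciprocity at the odd part, fed by $p=u^2-2v^2$, should produce $(p)=\leg{2u}{v}$, giving part $(2)$. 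Throughout, Lemma~\ref{(p) is well-defined} guarantees that the resulting symbol $(p)$ is independent of the chosen solution $(u,v)$.

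The hard part will be steps one and two themselves, namely exhibiting the square-root ideals $\fa_1,\fa_2$ concretely in terms of $u$ and $v$ and pinning down their norms. Deciding \emph{whether} a class is a square is immediate from $\chi$, but producing an explicit representative of a \emph{prescribed} square root — equivalently, a binary quadratic form of order $4$ (respectively $8$) written through the representation $p=u^2-2v^2$ — is the delicate point, and it is exactly there that the governing expression $(-1)^{\frac{p+1}{16}}(p)$ emerges, from a careful local (especially $2$-adic) computation of Hilbert symbols. This is the ideal-theoretic translation of the quadratic-form manipulations of Leonard--Williams~\cite{LW82}.
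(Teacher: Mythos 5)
Your reduction is sound, and it coincides with the first half of the paper's own proof: the paper also tests squareness via the unique nontrivial genus character, phrased there as the Artin isomorphism $\Cl_k/2\Cl_k\cong\Gal(k(\sqrt{2})/k)$, so that a class $\cl(\fa)$ with $\gcd(\N\fa,2p)=1$ lies in $2\Cl_k$ iff $\leg{2}{\N\fa}=1$; and your $\fa_1$ is exactly the paper's ideal $J_u$, obtained from $(2v+\sqrt{-2p})(2v-\sqrt{-2p})=2u^2$ via $(2v+\sqrt{-2p})=J_u^2J_2$, $\N(J_u)=u$, giving part $(1)$ through $\leg{2}{u}=1\iff u\equiv\pm1\bmod 8\iff p\equiv-1\bmod 16$. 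Your bookkeeping about well-definedness modulo the ambiguous class $[\fp_2]$ is also correct.

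The genuine gap is your ``step two,'' which you explicitly defer as the hard part: for part $(2)$ that deferred step is not a technical detail but the entire content of the theorem, namely the identity $\leg{2}{s}=(-1)^{\frac{p+1}{16}}\leg{2u}{v}$, where $s=\N(J_s)$ for a square root $\cl(J_s)$ of $\cl(J_u)$. Moreover, the route you sketch --- exhibiting $\fa_2$ concretely in terms of $u,v$ and then computing local Hilbert symbols --- is not what the paper does and is not obviously feasible: no closed-form ideal of order $8$ built from $(u,v)$ is produced anywhere, and the difficulty of controlling such classes is precisely why $16$-rank questions require the analytic machinery of \cite{Milovic}. The paper sidesteps the explicit construction entirely. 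It takes an \emph{arbitrary} integral ideal $J_s$ with $\cl(J_s)^2=\cl(J_u)$ and $\gcd(s,2up)=1$, normalizes $u\equiv 1\bmod 16$ (harmless by Lemma~\ref{(p) is well-defined}, replacing $u+v\sqrt{2}$ by $(1+\sqrt{2})^4(u+v\sqrt{2})$), and writes down generators $x+y\sqrt{-2p}$ of $\overline{J_u}J_s^2$ and $z+w\sqrt{-2p}$ of $J_2J_uJ_s^2$. Taking norms and comparing the two ideal equations yields $us^2=x^2+2py^2$, $uw=x+2vy$, and hence $s^2=uw^2-4vyw+2uy^2$; parity arguments give $s^2\equiv w^2\bmod 16$, and a chain of Jacobi-symbol manipulations then gives $\leg{2}{s}=\leg{u}{v}$. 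Finally, the factor $(-1)^{\frac{p+1}{16}}$ arises not from a $2$-adic Hilbert symbol but from the elementary congruence $2v^2\equiv 1-p\bmod 32$, i.e.\ $\leg{2}{v}=(-1)^{\frac{p+1}{16}}$. Without an argument of this kind, your proposal establishes only the genus-theoretic framework (essentially known since R\'edei--Reichardt), not the Leonard--Williams criterion itself.
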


\begin{proof}
Let $J_2$ be the  prime ideal  of $k:=\Q(\sqrt{-2p})$ above 2. Then we have   that  $N(J_2)=|\co_k/{J_2}|=2$ and that  $J_2$ has order $2$ in the class group $\Cl_k$.
One can write  $p=u^2-2v^2$ such that $u\in \mathbb{N}$ and $u\equiv1\bmod 4$. Hence $(2v+\sqrt{-2p})(2v-\sqrt{-2p})=2u^2$. Note that the ideals $ (2v+\sqrt{-2p}) J^{-1}_2$ and $ (2v-\sqrt{-2p}) J^{-1}_2$  are coprime.  It follows that  there exists an ideal $J_u$  such that $(2v+\sqrt{-2p}) =J^2_uJ_2$. Automatically,  $N(J_u)=u$  and  $\cl(J_u)$ has order $4$ in $\Cl_k$.

Since the genus field of $k$ is $k(\sqrt{2})$. By class field theory, the Artin map induces an isomorphism
\[ \big[\frac{k(\sqrt 2) / k}{-}\big]: \Cl_k/{2\Cl_k}\cong \Gal(k(\sqrt{2})/k).\]
Therefore, we have
\[\begin{split}
8|h(-2p) \Longleftrightarrow  & \cl(J_u)\in 2\Cl_k \\
\Longleftrightarrow  &\big[\frac{k(\sqrt 2) / k}{J_u}\big]=1 \Longleftrightarrow  \big[\frac{\Q(\sqrt 2) / \Q}{N(J_u)}\big]=\leg{2}{u}=1  \text{ (by norm-functoriality of Artin map)}\\
\Longleftrightarrow & u^2\equiv 1\bmod 16  \text{  } (\Longleftrightarrow  u\equiv 1\bmod 8, \text{ since } u\equiv 1\bmod 4)\\
\Longleftrightarrow & p\equiv -1\bmod 16 \text{ (since } p=u^2-2v^2 \text{ with } u,v \text{ odd}). \\ \end{split}\]

Now let us  prove (2).    Since $8\mid h(-2p)$,  we  choose an integral ideal $J_s$ with norm $s$ such that $\cl(J_s)^2=\cl(J_u)$. By approximation theorem,   we can further assume that $\gcd(s,2up)=1$.  Hence $\cl(J_s)$ has order $8$ in $\Cl_k$ and one has that 
\[16\mid h(-2p) \Longleftrightarrow \cl(J_s) \in 2\Cl_k  \Longleftrightarrow \big[\frac{k(\sqrt 2) / k}{J_s}\big]=1 \Longleftrightarrow  \leg{2}{s}=1.\]

Thus it remains to prove that
\begin{equation}\label{eq:  main in pf of 16rank}
\leg{2}{s}=(-1)^{\frac{p+1}{16}}(p).  \tag{$\ast \ast$}
\end{equation}

   In the decomposition $p=u^2-2v^2$,  we have $u\equiv 1\bmod 8$ by (1).
   We may  further  assume that  $u\equiv 1\bmod 16$ by replacing   $u+\sqrt{2}v$ to   $(1+\sqrt{2})^4(u+v\sqrt{2})$.
Let  $x+y\sqrt{-2p}$  and $z+w\sqrt{-2p}$  denote the generators of the  principal ideals $\overline{J_u}J_s^2$ and  $J_2J_uJ_s^2$, respectively. Here  $\overline{J_u}=(u)J^{-1}_u$ is the conjugate ideal of $J_u$ and we assume $x>0$.
By taking norm of the equality $\overline{J_u}J_s^2=(x+y\sqrt{-2p})$, we have
\begin{equation}\label{eq:  1 pf of 16rank}
us^2=x^2+2py^2.
\end{equation}

From the equality of ideals
\[(z+w\sqrt{-2p})=J_2J_uJ^2_s=J_2J^2_u J^{-1}_u J^2_s=(2v+\sqrt{-2p})u^{-1}(x+y\sqrt{-2p}),\]
we can change the sign of $z, w$ such that   \begin{equation} \label{eq:  2 pf of 16rank}
uw=x+2vy.
\end{equation}

From \eqref{eq:  1 pf of 16rank}, \eqref{eq:  2 pf of 16rank} and $p=u^2-2v^2$, we obtain
\begin{equation}\label{eq:  3 pf of 16rank}
  s^2=uw^2-4vyw+2uy^2.
\end{equation}

We claim that $s^2\equiv w^2\bmod 16$.  Since $s$ is odd,  we know that  $y$ is even and that $x$ is odd from \eqref{eq:  1 pf of 16rank}.
From \eqref{eq:  2 pf of 16rank}, we have $w$ is odd. Then the claim holds by \eqref{eq:  3 pf of 16rank}.

Therefore  \begin{equation}\label{eq: 4 pf of 16rank}
\leg{2}{s}= \leg{2}{|w|}=\leg{u}{|w|}=\leg{w}{u}=\leg{v}{u} \leg{y}{u}=\leg{v}{u}=\leg{u}{v}.
\end{equation}
The first equality is by the above claim.
The second and the fourth equalities are by \eqref{eq:  3 pf of 16rank}. The third and the last equalities are by the quadratic reciprocity law and the fact that  $u\equiv 1\bmod 16$.   To see the fifth equality, write $y=2^t y_0$ with $2\nmid y_{0}$. Then by $u\equiv 1\bmod 16$,  we have  $\leg{y}{u}=\leg{2^t y_{0}}{u}=\leg{ y_{0}}{u}=\leg{u}{|y_{0}|}=1$. The last equality   follows from \eqref{eq:  1 pf of 16rank}.

From $p=u^2-2v^2$, we obtain that $2v^2\equiv 1-p\bmod 32$ as $u\equiv 1\bmod 16$. Then
\[ \leg{2}{v}=(-1)^{\frac{p+1}{16}}.  \]
Thus \eqref{eq: 4 pf of 16rank} and the above equality imply \eqref{eq:  main in pf of 16rank}.  This proves the proposition.  \end{proof}

\subsection{Proof of Theorem~\ref{thm:density}}

 The following is  the main result  in \cite[Theorem~2]{Milovic}.
 \begin{equation} \label{eq: milovic}
 \sum\limits_{p \leq X \atop p\equiv -1\bmod16} (-1)^\frac{p+1}{16} (p) \ll_\epsilon X^{\frac{149}{150}+\epsilon}.\end{equation}
  Here $f(X)\ll_\epsilon g(X)$ means that for each $\epsilon>0$, there exists some positive constant $C_\epsilon$ such that $|f(X)|\leq C_\epsilon g(X)$.  Then by  Theorem~\ref{thm: 16-rank formula}, Milovic obtains the density of the set of primes  $p$ such that $p\equiv 3\bmod 4$ and $16\mid h(-2p)$. 
  
We will show  that  Milovic's method can be used to prove the following.
  \begin{equation}\label{eq: main}
  A^{+}(X)- A^{-}(X)=\sum\limits_{p \leq X \atop p\equiv -1\bmod16} (p) \ll_\epsilon X^{\frac{149}{150}+\epsilon} .
  \end{equation}
Here $A^{\pm}(X)=\#\{p \leq X, p\equiv -1\bmod16, (p)=\pm1\}$. On the other hand,  by Dirichlet's density theorem, 
\begin{equation*}
  A^{+}(X)+ A^{-}(X)= \sum\limits_{p \leq X \atop p\equiv -1\bmod16} 1 \sim  \frac{1}{8}\frac{X}{\log X}.
\end{equation*}
Here $f(X) \sim g(X)$ means the limit of $f(X)/g(X)$ is $1$ as $X\rightarrow \infty$. We conclude that 
\[A^{-}(X)\sim \frac{1}{16}\frac{X}{\log X}.\] 
 Theorem~\ref{thm:density} then follows.  It remains to prove  \eqref{eq: main}. 
 
 To prove \eqref{eq: milovic},   Milovic    \cite[Section 3.2]{Milovic} defines the spin symbol for all totally positive elements of $\Z[\sqrt{2}]$ as follows.
 \[   [u+v\sqrt{2}]= \begin{cases}
\leg{v}{u}, \text{ if }  u \text{ is odd }\\
0,  \quad \text{ otherwise. }
 \end{cases} \]

In order to prove our result, we define the twisted spin symbol for all totally positive elements of $\Z[\sqrt{2}]$ as 
\[[u+v\sqrt{2}]'=[u+v\sqrt{2}] \lambda(u+v\sqrt{2}),\] where $\lambda(u+\sqrt{2}v)=(-1)^{\frac{u^2-2v^2+1}{16}}$ if $u^2-2v^2\equiv -1\bmod16$ and 1 otherwise.  
One can easily show that the twisted  spin symbol satisfies
$[(1+\sqrt{2})^8(u+v\sqrt{2})]'=[u+v\sqrt{2}]'$ which is an analogous property of the spin symbol $[u+v\sqrt{2}]$ as in \cite[Proposition 2]{Milovic}.  Then one can replace $[u+v\sqrt{2}]$ in \cite{Milovic} by $[u+v\sqrt{2}]'$ to follow   Milovic's argument \cite[Page 993-1013]{Milovic}.  This  gives a proof of \eqref{eq: main} and then Theorem~\ref{thm:density}.

Using \eqref{eq: milovic} and \eqref{eq: main}, we  obtain   refined results on $h(-2p)$ for $p\equiv -1\bmod16$. 
\begin{cor}We have
  \[ \begin{split}
&\lim\limits_{X \rightarrow \infty} \frac{\#\{p \leq X: p \equiv 15 \bmod32 \text{ and }8\parallel h(-2p)\}}{\#\{p \leq X: p \equiv 15 \bmod32 \}}  \\ = 
&\lim\limits_{X \rightarrow \infty} \frac{\#\{p \leq X:  p \equiv 31 \bmod32 \text{ and } 8\parallel h(-2p)\}}{\#\{p \leq X: p \equiv 31 \bmod32 \}}=\frac{1}{2}.
  \end{split}\]
\end{cor}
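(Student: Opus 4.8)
The plan is to reduce the corollary to a cancellation statement separately in each residue class modulo $32$, and then feed in Milovic's estimate \eqref{eq: milovic} together with our estimate \eqref{eq: main}. First I would invoke Theorem~\ref{thm: 16-rank formula}(2): for every prime $p \equiv -1 \bmod 16$ one has $8 \parallel h(-2p)$ if and only if $(-1)^{\frac{p+1}{16}}(p) = -1$. Both classes $p \equiv 15 \bmod 32$ and $p \equiv 31 \bmod 32$ lie inside $p \equiv -1 \bmod 16$ and together partition it, so this characterization applies throughout. The key local observation is that the sign $(-1)^{\frac{p+1}{16}}$ is constant on each class modulo $32$: writing $p = 32k+15$ gives $\frac{p+1}{16} = 2k+1$, hence $(-1)^{\frac{p+1}{16}} = -1$, while $p = 32k+31$ gives $\frac{p+1}{16} = 2(k+1)$, hence $(-1)^{\frac{p+1}{16}} = +1$. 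Thus the condition $8 \parallel h(-2p)$ becomes a pure condition on $(p)$: it reads $(p) = 1$ for $p \equiv 15 \bmod 32$ and $(p) = -1$ for $p \equiv 31 \bmod 32$.

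Next I would introduce the partial character sums $S_{15}(X) = \sum_{p \le X,\, p \equiv 15 \bmod 32}(p)$ and $S_{31}(X) = \sum_{p \le X,\, p \equiv 31 \bmod 32}(p)$. Since $(-1)^{\frac{p+1}{16}}$ equals $-1$ on the class $15$ and $+1$ on the class $31$, the two available estimates become bounds on linear combinations of $S_{15}$ and $S_{31}$:
\[
S_{15}(X) + S_{31}(X) = \sum\limits_{\substack{p \le X \\ p \equiv -1 \bmod 16}} (p) \ll_\epsilon X^{\frac{149}{150}+\epsilon},
\qquad
S_{31}(X) - S_{15}(X) = \sum\limits_{\substack{p \le X \\ p \equiv -1 \bmod 16}} (-1)^{\frac{p+1}{16}}(p) \ll_\epsilon X^{\frac{149}{150}+\epsilon},
\]
the first being \eqref{eq: main} and the second \eqref{eq: milovic}. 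Adding and subtracting these two bounds then yields $S_{15}(X) \ll_\epsilon X^{\frac{149}{150}+\epsilon}$ and $S_{31}(X) \ll_\epsilon X^{\frac{149}{150}+\epsilon}$ \emph{separately}. This is the heart of the argument: neither input bound by itself controls a single class modulo $32$, but their sum and difference disentangle the two subclasses precisely because the twisting sign is locally constant modulo $32$.

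Finally I would conclude by the standard density comparison. Setting $A^{\pm}_{a}(X) = \#\{p \le X,\, p \equiv a \bmod 32,\, (p) = \pm 1\}$ for $a \in \{15,31\}$, we have $S_a(X) = A^+_a(X) - A^-_a(X)$, while Dirichlet's theorem gives $A^+_a(X) + A^-_a(X) \sim \tfrac{1}{16}\tfrac{X}{\log X}$ (both classes being coprime to $32$). Since $S_a(X) = o(X/\log X)$ by the previous step, both $A^+_a(X)$ and $A^-_a(X)$ are asymptotic to $\tfrac{1}{32}\tfrac{X}{\log X}$, so each has relative density $\tfrac12$ in its class. Combining with the characterization of the first paragraph—$A^+_{15}(X)$ counts exactly the relevant primes in the class $15$, and $A^-_{31}(X)$ those in the class $31$—gives both limits equal to $\tfrac12$.

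I expect no genuine obstacle, as both required analytic bounds are taken as input; the only point requiring care is the bookkeeping showing that estimates phrased over the single class $-1 \bmod 16$ can be split into the two subclasses modulo $32$, which works exactly because $(-1)^{\frac{p+1}{16}}$ is determined by $p \bmod 32$.
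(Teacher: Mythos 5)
Your proposal is correct and follows essentially the same route as the paper: the paper likewise adds and subtracts the bounds \eqref{eq: milovic} and \eqref{eq: main} to isolate the character sums over the classes $15$ and $31 \bmod 32$ (writing the sum as $2\sum_{p\equiv -1 \bmod 32}(p)$ and treating the other class ``similarly''), then applies Theorem~\ref{thm: 16-rank formula} together with Dirichlet's density theorem. Your write-up simply makes explicit the sign bookkeeping and the $A^{\pm}_a$ counting that the paper leaves implicit.
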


\begin{proof}
  By \eqref{eq: milovic} and \eqref{eq: main}, we have
  \[2\sum_{p \leq X \atop p\equiv -1\bmod32}(p)=\sum_{p \leq X \atop p\equiv -1\bmod16} (-1)^\frac{p+1}{16} (p)+(p) \ll_\epsilon X^{\frac{149}{150}+\epsilon}  .\]
 By Theorem~\ref{thm: 16-rank formula}, for  $p\equiv -1\bmod32$ one has  $8\parallel h(-2p) $ if and only if  $ (p)=-1$. Hence the result follows the Dirichlet's density theorem.
  The case  $p\equiv 15\bmod32$  can be shown similarly.
\end{proof}

\end{document}